\documentclass[12pt,a4paper]{article}

\usepackage{amsmath}
\usepackage{amssymb}
\usepackage{amsthm}
\usepackage{mathtools}
\usepackage{hyperref} 
\usepackage[usenames]{color}

\newcommand{\1}[1]{{\mathbf 1}{\{#1\}}}

\newcommand{\Z}{{\mathbb Z}}
\newcommand{\N}{{\mathbb N}}
\newcommand{\R}{{\mathbb R}}
\newcommand{\IP}{{\mathbb P}}
\newcommand{\IE}{{\mathbb E}}
\newcommand{\8}{{\infty}}

\newcommand{\LL}{{\mathcal L}}
\newcommand{\EE}{{\mathcal E}}

\newcommand{\NN}{{\mathcal N}}

\newcommand{\capa}{\mathop{\mathrm{cap}}\nolimits}
\newcommand{\osc}{\mathop{\mathrm{osc}}\nolimits}
\newcommand{\Vs}{{V_{\!*}}}
 
\allowdisplaybreaks

\newtheorem{theo}{Theorem}[section]
\newtheorem{lem}[theo]{Lemma}
\newtheorem{df}[theo]{Definition}
\newtheorem{prop}[theo]{Proposition}
\newtheorem{cor}[theo]{Corollary}
\newtheorem{rem}[theo]{Remark}
\newtheorem{example}[theo]{Example}

\title{Recurrence and transience\\
of open Jackson networks with balanced nodes}
\author{Serguei~Popov\thanks{Centro de Matem\'atica, University of 
Porto, Porto, Portugal. E-mail: \texttt{serguei.popov@fc.up.pt}}}
\date{\today}

\begin{document}

\maketitle

%
%

\begin{abstract}
Consider an open Jackson network with Poisson arrivals and exponential
services, in which no node is overloaded and exactly~$k_0$ nodes have load
equal to~$1$. We prove that the queue-length process is recurrent if and
only if $k_0\leq 2$. For that, we show that
the network satisfies a sector condition, so its
capacities are comparable to those of its symmetrisation; 
the latter is a
reversible network whose recurrence/transience
can be dealt with by standard means. 
We also discuss what the Lyapunov-function method gives in this setting;
in particular, we show that every balanced queue is empty at arbitrarily
large times, also in the transient case.
\\[.3cm]\textbf{Keywords:} Jackson network, recurrence, transience,
capacity, non-reversible Markov chains, Dirichlet form,
sector condition, Lyapunov function
\\[.3cm]\textbf{AMS 2020 subject classifications:}
Primary 60K25. Secondary 60J27, 60J46, 90B15.
\end{abstract}

\section{Introduction and results}
\label{s_intro}

An \emph{open Jackson network}~\cite{Jac57,Jac63,Kel79} on the node set
$V=\{1,\dots,d\}$ is specified by external Poisson arrival rates
$a_i\geq 0$, exponential service rates $\mu_i>0$, and a substochastic
routing matrix $P=(p_{ij})_{i,j\in V}$: a customer completing service
at~$i$ moves to~$j$ with probability~$p_{ij}$ and leaves the network with
probability $p_{i*}:=1-\sum_j p_{ij}$. Each node is a single-server queue.
The queue-length process $Q(t)=(Q_1(t),\dots,Q_d(t))$ is then a
continuous-time Markov chain on~$\Z_+^d=\{0,1,2,\ldots\}^d$ with bounded jump rates.
Throughout the paper we assume that $Q$ is irreducible
on~$\Z_+^d$.

The goal of this paper is to obtain a complete
\emph{recurrence/transience} classification of~$Q$. In queueing theory the
question traditionally asked is that of \emph{stability}
(i.e., positive recurrence, or ergodicity), 
since this is what makes long-run performance measures
meaningful; for Jackson networks it is answered by the product-form
theory~\cite{Jac57}, see also~\cite{FMMS93} for a Lyapunov-function
approach. The finer question of recurrence versus
transience of a non-ergodic system is, however, natural from the point of
view of Markov chains and random walks, and has been studied for several
classical queueing models; see e.g.~\cite{FG26,Pop25} for infinite-server
queues whose inter-arrival or service times may have infinite mean,
\cite{PW21} for two-dimensional queueing processes, \cite{MM03} for critical
random walks on two-dimensional complexes arising from polling systems,
and~\cite{FMM95,MPW17} for the Lyapunov-function approach to
recurrence and transience of random walks arising from queueing systems.
For random walks in the quarter plane with zero drift in the interior the
classification was obtained in~\cite{FMM92,AFM95}; its continuous
analogue, reflected Brownian motion in a wedge, is classified
in~\cite{VW85,Wil85} (see also~\cite{HR93}). For non-ergodic Jackson
networks, Goodman and Massey~\cite[Theorem~1]{GM84} proved that the queue
lengths at the nodes with load strictly less than~$1$ (see~\eqref{eq_SBT}
below) converge jointly in distribution to a product of geometric laws,
whereas the queue length at every other node tends to infinity in
probability.

Let $\Vs:=V\cup\{*\}$, the extra node~$*$ standing for the outside world.
The \emph{routing graph} is the directed graph on~$\Vs$ which has an arc
$(j,\ell)$, $j\neq\ell$, if either $j\in V$ and $p_{j\ell}>0$
(here $\ell\in\Vs$), or $j=*$ and $a_\ell>0$. It is elementary to check that
$Q$ is irreducible if and only if every node can be \emph{filled}
and \emph{drained}, in the terminology of~\cite{GM84}, that is, if and only
if for every $j\in V$ the routing graph contains a directed path from~$*$
to~$j$ and a directed path from~$j$ to~$*$; we use these two properties
without further comment. Vectors in~$\R^V$ are row vectors; inequalities
between them, as well as $x\wedge y$, are understood componentwise,
$\mathbf 1\in\R^V$ is the vector all of whose entries equal~$1$, and $e_j$
is the $j$-th unit vector (of~$\R^V$, or of~$\Z^d$). For $x\in\Z^d$ and
$A\subset V$ we write $x_A:=(x_j)_{j\in A}$; in particular,
$Q_A(t):=(Q_j(t))_{j\in A}$. For $y\in\Z_+^m$ we write $|y|:=\sum_iy_i$.
Since every node can be
drained, $P^n\to 0$; hence the spectral radius~$r$ of~$P$ is strictly less
than~$1$, and~$I-P$ is invertible, with $(I-P)^{-1}=\sum_{n\geq 0}P^n\geq 0$.

The \emph{traffic equations} are
\begin{equation}
\label{eq_traffic}
\lambda_j=a_j+\sum_{i\in V}\lambda_ip_{ij},\quad j\in V,
\qquad\text{that is,}\quad \lambda=a+\lambda P ,
\end{equation}
and they have the unique solution $\lambda=a(I-P)^{-1}=\sum_{n\geq 0}aP^n\geq 0$,
with $\lambda_j>0$ for every~$j$ because every node can be filled. The
number~$\lambda_j$ is the rate at which customers would arrive at~$j$ if
every node were able to forward \emph{all} of its own input; we call
$\widehat\rho_j:=\lambda_j/\mu_j$ the \emph{nominal load} of~$j$.

Equation~\eqref{eq_traffic} accounts for the flow through~$j$
as~$\lambda_j$, which is correct only as long as~$j$ can serve that flow:
a node receiving more than it can serve forwards customers at rate~$\mu_j$
and not~$\lambda_j$, so a saturated node acts on the rest of the network
as a source of rate~$\mu_j$, and~\eqref{eq_traffic} overestimates the flow
downstream of it. The correct bookkeeping is the (nonlinear)
\emph{throughput equation} of Goodman and Massey~\cite{GM84},
\begin{equation}
\label{eq_through}
\nu_j=a_j+\sum_{i\in V}(\nu_i\wedge\mu_i)p_{ij},\quad j\in V,
\qquad\text{that is,}\quad \nu=a+(\nu\wedge\mu)P,
\end{equation}
where $\nu_j$ is the total rate at which
customers arrive at~$j$, and $\nu_j\wedge\mu_j$ is the rate at which~$j$
forwards them, i.e., its actual long-run throughput.

\begin{prop}
\label{p_through}
Equation~\eqref{eq_through} has a unique solution $\nu\in\R^V$, and
$0\leq\nu\leq\lambda$. Moreover, $\nu\leq\mu$ if and only if
$\lambda\leq\mu$, and in that case $\nu=\lambda$.
\end{prop}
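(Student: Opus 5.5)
The plan is to view \eqref{eq_through} as a fixed-point problem $\nu=F(\nu)$ for the map $F(x):=a+(x\wedge\mu)P$ on $\R^V$. I will prove existence by monotone iteration and uniqueness by a contraction-type argument that uses $P^n\to0$.

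\emph{Monotonicity and existence.} Since $P\geq0$ entrywise, $F$ is monotone: $x\leq y$ implies $F(x)\leq F(y)$. For $x\geq0$ we also have $F(x)\geq a\geq0$. And $F(x)\leq a+xP$ for $x\geq0$, so $F(\lambda)\leq a+\lambda P=\lambda$. Start from $\nu^{(0)}:=\lambda$ and set $\nu^{(n+1)}:=F(\nu^{(n)})$. By monotonicity this sequence is nonincreasing, and it is bounded below by $0$. Its limit $\nu$ satisfies $\nu=F(\nu)$, by continuity of $F$, and $0\leq\nu\leq\lambda$.

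\emph{Uniqueness.} This is the step needing care, because $F$ need not be a contraction in the sup norm. Suppose $\nu,\nu'$ both solve \eqref{eq_through}, and put $w:=|\nu-\nu'|$ (componentwise). Since $|x\wedge\mu-y\wedge\mu|\leq|x-y|$ componentwise and $P\geq0$, we get $w\leq wP$. Iterating gives $w\leq wP^n$ for every $n$, and $P^n\to0$, so $w=0$. In particular any solution coincides with the one built above, which gives $0\leq\nu\leq\lambda$ for the unique solution.

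\emph{The equivalence.} If $\lambda\leq\mu$, then $\lambda\wedge\mu=\lambda$, so $\lambda$ itself solves \eqref{eq_through}. By uniqueness $\nu=\lambda\leq\mu$. Conversely, if $\nu\leq\mu$, then $\nu\wedge\mu=\nu$, so $\nu=a+\nu P$. This is the traffic equation \eqref{eq_traffic}, whose unique solution is $\lambda$ because $I-P$ is invertible. Hence $\lambda=\nu\leq\mu$. Both directions also yield $\nu=\lambda$.

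I expect no real obstacle. The only subtle point is the uniqueness step, where one must use $P^n\to0$ (spectral radius $<1$) rather than assume a norm contraction.
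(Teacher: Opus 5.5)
Your proof is correct. Its overall structure is the same as the paper's: monotone iteration for existence, uniqueness from $P^n\to0$, and the identical argument for the equivalence. The genuine difference is in the uniqueness step.

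\textbf{Uniqueness.} The paper fixes $\theta\in(r,1)$ and builds the weight $u=\sum_{n\ge0}\theta^{-n}\mathbf 1P^n$, which satisfies $uP\le\theta u$. It then shows that $F$ is a $\theta$-contraction for the weighted norm $\|x\|_u=\max_j|x_j|/u_j$. You instead argue componentwise: from $w:=|\nu-\nu'|\le wP$ and $P\ge0$ you get $w\le wP^n\to0$. This needs neither the spectral radius nor an adapted norm, only $P^n\to0$. It is the more elementary of the two arguments and gives up nothing. The paper's norm is essentially a quantitative packaging of the same inequality, since $w\le wP$ gives $\|w\|_u\le\theta\|w\|_u$. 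Your componentwise iteration also yields $|F^n(x)-\nu|\le|x-\nu|P^n$, i.e.\ convergence of the fixed-point iteration from any starting point. That is what the contraction would deliver, and it is not needed here anyway.

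\textbf{Existence.} A minor difference is the direction of iteration. You iterate downward from $\lambda$, while the paper iterates upward from $0$, with $F^n(\lambda)\le\lambda$ as an upper barrier. Both give $0\le\nu\le\lambda$ equally well.
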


This is proved in~\cite{GM84} by a longer argument; since the following
proof takes only a few lines, we include it.

\begin{proof}
Fix $\theta\in(r,1)$ and put $u:=\sum_{n\geq 0}\theta^{-n}\,\mathbf 1P^n$,
the series being convergent because the spectral radius of $\theta^{-1}P$
is $r/\theta<1$. Then $u\geq\mathbf 1$ and $uP=\theta(u-\mathbf 1)\leq\theta u$,
so that for the weighted norm $\|x\|_u:=\max_j|x_j|/u_j$ we have
$|xP|\leq|x|P\leq\|x\|_u\,uP\leq\theta\|x\|_u\,u$, that is,
$\|xP\|_u\leq\theta\|x\|_u$. Since $y\mapsto y\wedge\mu$ is
componentwise $1$-Lipschitz, the map $F(\nu):=a+(\nu\wedge\mu)P$ is a
$\theta$-contraction for~$\|\cdot\|_u$, so that~\eqref{eq_through} has at
most one solution. Moreover, $F$ is monotone, $F(0)=a\geq 0$, and
$F(\lambda)\leq a+\lambda P=\lambda$. Hence the iterates $F^n(0)$ increase
and are bounded by $F^n(\lambda)\leq\lambda$, so that they converge to
some~$\nu$ with $0\leq\nu\leq\lambda$, which solves~\eqref{eq_through} by the
continuity of~$F$.

If $\lambda\leq\mu$, then $\lambda\wedge\mu=\lambda$, so~$\lambda$
solves~\eqref{eq_through}; by uniqueness $\nu=\lambda\leq\mu$. Conversely,
if $\nu\leq\mu$, then $\nu\wedge\mu=\nu$ and~\eqref{eq_through} becomes
$\nu=a+\nu P$, so $\nu=\lambda$ by uniqueness of the solution
of~\eqref{eq_traffic}, and $\lambda\leq\mu$.
\end{proof}

We call $\rho_j:=\nu_j/\mu_j$ the \emph{load} of~$j$, and we partition the network into \emph{stable}, \emph{balanced} and \emph{transient} nodes in the following way:
\begin{equation}
\label{eq_SBT}
S:=\{j:\rho_j<1\},\qquad B:=\{j:\rho_j=1\},\qquad T:=\{j:\rho_j>1\}.
\end{equation}
 Note that, 
by Proposition~\ref{p_through}, $T=\emptyset$ is
equivalent to $\lambda\leq\mu$, 
and then the two equations agree, so
that the sets~\eqref{eq_SBT} and the loads may be computed from the
\emph{linear} equation~\eqref{eq_traffic} alone.

\begin{example}
\label{ex_nominal}
When $T\neq\emptyset$ the two equations genuinely differ, 
and the set
$\{j:\widehat\rho_j>1\}$ of nominally overloaded nodes may be strictly
larger than~$T$. Consider the tandem queue $1\to 2$ 
(that is, $p_{12}=1$,
$p_{2*}=1$) with $a_2=0$ and $a_1>\mu_2>\mu_1$. Then
$\lambda=(a_1,a_1)$, so both nodes are nominally overloaded, whereas
$\nu=(a_1,\mu_1)$: node~$2$ receives only at rate $\mu_1<\mu_2$ and is in
fact stable, and $T=\{1\}$.
\end{example}

Now, we discuss the recurrence/transience classification,
which is the main goal of this paper.
First, if $B=T=\emptyset$, that is, if all loads are 
strictly less than~$1$, then~$Q$ is positive
recurrent, with the product-form stationary distribution
$\pi(x)=\prod_{j\in V}(1-\rho_j)\rho_j^{x_j}$~\cite{Jac57}; see
also~\cite[Ch.~2]{Kel79}.
If $T\neq\emptyset$, then~$Q$ is transient, 
and in fact ballistic: the
total number of customers grows at least linearly in time,
almost surely~\cite[Prop.~5.1 and Cor.~3.3]{Dai96}. 
Finally, if $B\neq\emptyset$, then $Q_i(t)\to\8$ in probability for every
$i\in B$~\cite[Theorem~1]{GM84}, 
so that~$Q$ is not positive recurrent. What is left
open is therefore the case
\begin{equation}
\label{eq_standing}
T=\emptyset \quad\text{and}\quad k_0:=|B|\geq 1, 
\end{equation}
in which $Q$ is either null recurrent or transient; \emph{this is the
situation we assume from now on}. By Proposition~\ref{p_through} we then
have $\nu=\lambda$, and we write~$\lambda$ for the common value of the two
solutions, so that $\rho_j=\lambda_j/\mu_j$ for every~$j$ and
$\lambda_i=\mu_i$ for $i\in B$. Our main result is the following.

\begin{theo}
\label{t_main}
Assume that $T=\emptyset$ and $k_0:=|B|\geq 1$.
If $k_0\leq 2$, then $Q$ is null recurrent; if
$k_0\geq 3$, then $Q$ is transient. Moreover, if $k_0\geq 3$, then
$|Q_B(t)|\to\8$ a.s., and there is $C<\8$ such that
\[
\IE_x\int_0^\8\1{|Q_B(t)|\leq n}\,dt\leq C\,(n+1)^{k_0}
\qquad\text{for all }x\in\Z_+^d\text{ and }n\geq 0 .
\]
\end{theo}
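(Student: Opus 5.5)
The approach is the one announced in the abstract: compare capacities of $Q$ with those of a reversible network, and classify the latter through its explicit product-form structure. Recurrence versus transience is determined by capacities, e.g. $\capa(\{x_0\},\8)$, which is zero exactly in the recurrent case. So it suffices to compare the Dirichlet form of $Q$ with that of a reversible chain whose capacities we can compute.

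\textbf{Step 1: the invariant measure.} Since $\nu=\lambda$, the product measure $\pi(x)=\prod_{j\in V}\rho_j^{x_j}$ is invariant for~$Q$; this is Jackson's partial-balance computation, which needs no summability. On the $B$-coordinates it is counting measure, and on the $S$-coordinates it is geometric.

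\textbf{Step 2: sector condition.} Let $L$ be the generator, $L^*$ its $\pi$-adjoint (the time-reversed Jackson network, with routing $\tilde p_{ij}=\lambda_jp_{ji}/\lambda_i$), and $L^s=(L+L^*)/2$ the symmetrisation, with form $\EE^s$. I would prove a strong sector condition
\[
|\langle f,Lg\rangle_\pi|\le K\,\EE^s(f,f)^{1/2}\EE^s(g,g)^{1/2}.
\]
The plan is to decompose $L$ into a sum of elementary cyclic ``customer routes'' along arcs of the routing graph. The antisymmetric part then becomes a finite combination of circulations around bounded cycles in $\Z^d$, and each such edge term is bounded by Cauchy--Schwarz against the symmetric conductances on the same edges. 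Boundary effects (server idle) must be handled by noting that the rates are bounded, and that every jump of $L$ and $L^*$ is also a jump of $L^s$ with comparable $\pi$-weighted rate. This gives the bound with a constant depending only on the finite data.

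Once the sector condition holds, the standard results (Gaul--Kozma type: capacities of nonreversible chains are comparable to those of the symmetrised chain up to the factor $K$) give $\capa\asymp\capa^s$. Hence $Q$ is recurrent iff the reversible chain with generator $L^s$ is. I expect this step to be the main obstacle, particularly getting uniform constants near the faces $\{x_j=0\}$.

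\textbf{Step 3: the reversible chain.} Its conductances are $c(x,x\pm e_j\mp e_\ell)\asymp\pi(x)$ wherever the move is allowed. Projecting onto $x_B$ and integrating out the geometric $S$-coordinates, I would show, by Nash-Williams cutsets for the upper bound and explicit unit flows for the lower bound, that the effective resistances are comparable to those of simple random walk on $\Z_+^{k_0}$ with connectivity induced by $B$ and $*$. This gives recurrence iff $k_0\le2$. Irreducibility ensures the relevant directions span $\R^{k_0}$. Null recurrence follows since $\pi$ is infinite, or from \cite{GM84}.

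\textbf{Step 4: the quantitative claims for $k_0\ge3$.} For the Green bound, write $\IE_x\int\1{|Q_B|\le n}\,dt=\sum_yG(x,y)\1{|y_B|\le n}$. The sector condition gives $G(x,y)\le C\,G^s(y,y)\pi(y)/\pi(y)$-type bounds, with $G^s(y,y)$ bounded uniformly by transience of the comparison walk. Summing $\pi(y)$ over $\{|y_B|\le n\}$ gives $C(n+1)^{k_0}$, since the geometric $S$-part sums to a constant. Finally, $|Q_B(t)|\to\8$ a.s. follows from the Green bound: each set $\{|Q_B|\le n\}$ has finite expected occupation time, uniformly in the start. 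Together with bounded jump rates, Borel--Cantelli then shows the set is visited only finitely often.
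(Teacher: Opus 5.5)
Your Steps 1--3 follow the paper's route: lifted routing-graph cycles for the sector condition, the Gaudilli\`ere--Landim comparison $\capa_s\le\capa\le C_0\capa_s$, Nash-Williams cutsets and transferred flows. The boundary problem you expect in Step~2 disappears once every transition is written as $z+e_j\to z+e_\ell$ with base $z=x\wedge y$ and $e_*=0$. Its conductance is then exactly $\pi(z)\kappa(j,\ell)$, and the traffic equations make $\kappa-\kappa^\top$ divergence free on the finite set $\Vs$.

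The genuine gap is in Step~4. Your bound $G(x,y)\le C\,G^s(y,y)\pi(y)/\pi(y)$ is just $G(x,y)\le C\,G^s(y,y)$. This is even true with $C=1$ and no sector condition, since $G(x,y)\le G(y,y)=\pi(y)/\capa(y)\le\pi(y)/\capa_s(y)=G^s(y,y)$. But it cannot give the claim. Indeed $G^s(y,y)=1/\big(q(y)\,\IP^s_y[\text{no return to }y]\big)\ge1/\Lambda$, while $\{y:|y_B|\le n\}$ is infinite as soon as $S\neq\emptyset$, so the sum diverges; there is no factor $\pi(y)$ left to sum. Nor can a bound $G(x,y)\le C\pi(y)$ come from the diagonal at~$y$. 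That would require $\capa_s(y)$ to be bounded below uniformly, whereas $\capa_s(y)\le\Lambda\pi(y)\to0$ as $|y_S|\to\8$. Moreover, transience of the comparison walk only gives finiteness of $G^s(y,y)$ for each $y$, not a uniform bound.

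The missing idea is to put the diagonal at the starting point, by time reversal. Let $Q^*$ be the $\pi$-reversal of $Q$; it has the same symmetrisation, so $\capa^*\ge\capa_s$. Then $\pi(x)G(x,y)=\pi(y)G^*(y,x)$ and $G^*(y,x)\le G^*(x,x)=\pi(x)/\capa^*(x)\le\pi(x)/\capa_s(x)$, hence $G(x,y)\le\pi(y)/\capa_s(x)$. Summing over $\{|y_B|\le n\}$ gives $(n+1)^{k_0}\prod_{j\in S}(1-\rho_j)^{-1}/\capa_s(x)$. This already yields $|Q_B(t)|\to\8$ a.s.\ by your final argument.

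The constant is still not uniform in $x$, because $\capa_s(x)\le\Lambda\pi(x)$ degenerates as $|x_S|\to\8$. The paper gets uniformity from a further ingredient your proposal lacks: the monotonicity of Jackson networks. Under the natural coupling (common arrival and service clocks with routing marks), the network started at $0$ stays componentwise below the one started at $x$. It therefore spends at least as much time in $\{|Q_B|\le n\}$, so the bound for $x=0$ applies to every $x$.
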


We do not expect the exponent~$k_0$ in the last bound to be sharp: by
analogy with the simple random walk and the Brownian motion in
dimension~$k_0\geq 3$, for which the expected total time spent in a ball of
radius~$n$ is of order~$n^2$, our conjecture is that
this bound should hold with $(n+1)^2$
in place of $(n+1)^{k_0}$.

The recurrence/transience classification of
Theorem~\ref{t_main} is what one would guess by comparison with the simple
random walk, which is recurrent in dimensions~$1$ and~$2$ and transient in
higher dimensions: the balanced queues play the role of the critical
directions, while the stable queues remain tight. This heuristic is,
however, far from being a proof. The two-dimensional simple random walk is
only barely recurrent, and the intuition that a two-dimensional walk with
zero drift (on average) has to be recurrent is simply wrong: there are
spatially non-homogeneous random walks in the plane with bounded increments
and zero drift everywhere which are transient~\cite{GMMW16}, and e.g.\ the random
walk on the randomly oriented Manhattan lattice in~$\Z^2$, whose drift
vanishes on average over the environment, is almost surely
transient~\cite{BRP26}. Closer to our setting, random walks in the quarter
plane with zero drift in the interior can be recurrent or transient,
depending on the reflection at the boundary~\cite{AFM95,FMM92}. For~$Q$,
the drift of a balanced queue vanishes only on average (when the stable
queues are in equilibrium), the drift does not vanish on the boundary of the
orthant, and~$Q$ is not reversible.

Theorem~\ref{t_main} also settles a question raised in~\cite{MMPW23},
where systems of $N+1$ particles on~$\Z$ performing nearest-neighbour
random walks with particle-dependent jump rates and exclusion interaction
are studied. The vector of the~$N$ gaps between consecutive particles is
the queue-length process of a Jackson network in which customers are routed
from a node to its two neighbours in $\{1,\dots,N\}$~\cite[Sect.~3]{MMPW23},
and it is irreducible under the standing assumption of~\cite{MMPW23} that all
particles jump to the right at positive rates. The system splits into
maximal stable sub-systems (``clouds''); by~\cite[Theorem~2.1]{MMPW23}, if
all clouds have the same asymptotic speed, then the gaps inside the clouds
are stable nodes and the gaps between consecutive clouds are balanced
nodes, so that $T=\emptyset$ and $k_0$ equals the number of clouds minus
one. For constant drifts, in which case every particle forms a cloud of its
own and all nodes are balanced, recurrence was proved for $N\leq 2$ (for
$N=2$ by means of~\cite{AFM95}) and transience was expected for $N=3$,
see~\cite[Example~2.18]{MMPW23}; for general systems whose clouds all have
the same speed, it was conjectured in~\cite[Conjecture~2.19]{MMPW23} that
$\liminf_{t\to\8}(X_{N+1}(t)-X_1(t))<\8$ a.s.\ when there are three clouds,
and $\liminf_{t\to\8}(X_{N+1}(t)-X_1(t))=\8$ a.s.\ when there are at least
four, where $X_1(t)<\dots<X_{N+1}(t)$ are the positions of the particles.
Since recurrence of the gap process is equivalent to the former property,
and transience to the latter, Theorem~\ref{t_main} confirms both
expectations.

Our main tool is the potential theory of non-reversible Markov chains
developed by Gaudilli\`ere and Landim~\cite{GL14}. A chain with an
invariant measure~$m$ has the same Dirichlet form as its
$m$-symmetrisation, whence $\capa\geq\capa_s$ for the corresponding
capacities; and if the generator satisfies a \emph{sector condition}, the
Dirichlet principle of~\cite{GL14} gives the reverse bound
$\capa\leq C_0\capa_s$. Here the product measure
$m(x)=\prod_j\rho_j^{x_j}$ is invariant and, since $\rho_i=1$ for
$i\in B$, constant in the coordinates indexed by~$B$; the symmetrised
network is therefore invariant under translations in those directions and
has summable weights in the others, and its recurrence is decided by an
elementary computation, the answer being $k_0\leq 2$.
Symmetrisation has been used for open Jackson networks before: Mao and
Xia~\cite[Sect.~3]{MX15} observed that the symmetrisation of an ergodic
open Jackson network with respect to its product-form stationary
distribution is again a Jackson network, and bounded the spectral gap of
the former through that of the latter. Their setting and objective are
different from ours: there the network is ergodic and~$m$ is a probability
measure, and the spectral gap is determined by the Dirichlet form alone;
here $m$ is not summable, and deciding recurrence requires, in addition, the
reverse comparison of capacities provided by the sector condition.

The point of the paper is that the passage back to~$Q$ is legitimate,
i.e., that a Jackson network satisfies a sector condition. Every
transition of~$Q$ moves a single customer along an arc of the routing
graph, and the remaining customers form a ``base'' configuration~$z$ which
the transition leaves untouched. In these coordinates the antisymmetric
part of the conductances factorises, and the conservation
law~\eqref{eq_traffic} says exactly that a certain flow on the
\emph{finite} graph~$\Vs$ is divergence free. Decomposing that flow into
cycles and lifting the cycles to the state space exhibits the
antisymmetric part as a superposition of cycles of length at most $d+1$,
each obtained by letting one additional customer travel once around a
cycle of the routing graph; cycles of bounded length are controlled by the
symmetric part through a local Poincar\'e inequality, which is the
mechanism by which cyclic random walks satisfy a sector condition,
cf.~\cite[Sect.~3.3]{KLO12} and~\cite[p.~82]{GL14}. 

We use the following conventions. All constants appearing below
($C$, $C_0$, $C_1$, $C_\flat$, $W$, $\Lambda$) are positive and finite and depend only on
the network parameters; the lowercase letter~$c$ is reserved for
conductances. We put $e_*:=0\in\Z^d$.

The rest of the paper is organised as follows. Section~\ref{s_cap}
collects what we need from the potential theory of non-reversible chains.
Section~\ref{s_sector} proves the sector condition for Jackson networks,
and hence that $Q$ is recurrent if and only if its symmetrisation is.
Section~\ref{s_sym} classifies the symmetrised network and proves
Theorem~\ref{t_main}. Section~\ref{s_lyap} discusses the Lyapunov-function
approach; it shows that every balanced queue is empty at arbitrarily large
times, also in the transient case, and bounds the tail of the time it needs
to empty. 

\section{Capacities of non-reversible Markov chains}
\label{s_cap}
Here, we collect some relevant definitions and facts from~\cite{GL14}.
Throughout this section $X$ is an irreducible continuous-time Markov chain
on a countable set~$E$ with bounded jump rates $q(x,y)$, $x\neq y$, total
rates $q(x):=\sum_{y\neq x}q(x,y)$, generator
$\LL f(x)=\sum_yq(x,y)(f(y)-f(x))$, and an invariant measure~$m$ (not necessarily of
finite total mass) with $0<m(x)<\8$ for every $x\in E$; that is,
\begin{equation}
\label{eq_inv}
\sum_{x\in E}m(x)q(x,y)=m(y)q(y)\qquad\text{for every }y\in E .
\end{equation}
This is the setting of~\cite[Sect.~2]{GL14}; in the notation used there,
$\mu=m$, $\lambda(x)=q(x)$, $r(x,y)=q(x,y)$ and $M(x)=m(x)q(x)$. We write
$\langle f,g\rangle_m:=\sum_xm(x)f(x)g(x)$ and
\[
\EE(f,g):=\langle-\LL f,g\rangle_m
\]
for finitely supported $f,g$; we
call $\EE(f,f)$ the \emph{Dirichlet form}, or the \emph{energy}, of~$f$.

Write $c(x,y):=m(x)q(x,y)$ for the \emph{conductances}, and let
\begin{equation}
\label{eq_cs}
c_s(x,y):=\tfrac12\big(c(x,y)+c(y,x)\big)=c_s(y,x)
\end{equation}
and
\begin{equation}
\label{eq_ca}
c_a(x,y):=\tfrac12\big(c(x,y)-c(y,x)\big)=-c_a(y,x)
\end{equation}
be their symmetric and antisymmetric parts. For a countable set~$A$, we call
an antisymmetric function $\vartheta:A\times A\to\R$ (that is,
$\vartheta(x,y)=-\vartheta(y,x)$ for all $x,y\in A$) a \emph{flow} on~$A$,
and we say that it is \emph{divergence free} if $\sum_y|\vartheta(x,y)|<\8$
and $\sum_y\vartheta(x,y)=0$ for every $x\in A$. By~\eqref{eq_inv}, the flow
$c_a$ on~$E$ is divergence free:
\begin{equation}
\label{eq_divfree}
\sum_yc_a(x,y)=\tfrac12\Big(m(x)q(x)-\sum_ym(y)q(y,x)\Big)=0,
\qquad x\in E .
\end{equation}
The \emph{symmetrisation} of~$X$ is the chain $q_s$ with rates
$q_s(x,y):=c_s(x,y)/m(x)$; it is $m$-reversible, has the same total rates
as~$X$, and also admits~$m$ as an invariant measure, all three assertions
being immediate from~\eqref{eq_inv}.

The starting point is that \emph{the energy does not see the
antisymmetric part of the dynamics}: for finitely supported~$f$,
\begin{equation}
\label{eq_energyid}
\EE(f,f)=\tfrac12\sum_{x,y}m(x)q(x,y)\big(f(y)-f(x)\big)^2=\EE_s(f,f),
\end{equation}
where $\EE_s$ is the Dirichlet form of~$q_s$.
 The first equality
 is~\cite[Sect.~2]{GL14}; 
the second equality holds because the middle
expression is unchanged when the names of $x$ and~$y$ are interchanged,
and therefore involves $c_s$ only.

Fix $x_0\in E$ and an increasing sequence of finite sets
$K_1\subset K_2\subset\cdots$ with $x_0\in K_1$ and
$\bigcup_nK_n=E$, and put $G_n:=E\setminus K_n$. Let
$h_n(x):=\IP_x[\,X\text{ hits }x_0\text{ before }G_n\,]$, so that
$h_n(x_0)=1$, $h_n=0$ on~$G_n$ and $\LL h_n=0$ elsewhere, and set
\[
\capa(x_0,G_n):=\EE(h_n,h_n),\qquad \capa(x_0):=\lim_n\capa(x_0,G_n),
\]
and analogously $\capa_s$ for~$q_s$. By~\cite[(2.6)]{GL14} this is the
capacity $\capa(\{x_0\},G_n)$ of~\cite[Definition~2.1]{GL14}, so that
\begin{equation}
\label{eq_caphit}
\capa(x_0,G_n)=m(x_0)q(x_0)\,
 \IP_{x_0}\big[X\text{ hits }G_n\text{ before returning to }x_0\big] ;
\end{equation}
letting $n\to\8$, we see that $X$ is transient if and only if
$\capa(x_0)>0$, cf.~\cite[(5.1)]{GL14}. Furthermore,
by~\cite[Lemma~2.5]{GL14},
\begin{equation}
\label{eq_capineq}
\capa(x_0,G_n)\geq\capa_s(x_0,G_n)\quad\text{for every }n,
\qquad\text{hence}\quad\capa(x_0)\geq\capa_s(x_0),
\end{equation}
so that $X$ is transient as soon as its symmetrisation is. Finally, since
$q_s$ is reversible, \cite[(2.11)]{GL14} gives the Dirichlet principle
\begin{equation}
\label{eq_dirichlet}
\capa_s(x_0,G_n)=\inf\big\{\EE_s(f,f):\ f(x_0)=1,\ f=0\text{ on }G_n\big\}.
\end{equation}

Recurrence requires a bound in the opposite direction
to~\eqref{eq_capineq}, and this is where the Dirichlet principle for
non-reversible chains of~\cite{GL14} comes in.

\begin{df}
\label{d_sector}
We say that~$\LL$ satisfies a \emph{sector condition} with constant
$C_0<\8$ if
\begin{equation}
\label{eq_sector}
\langle\LL f,g\rangle_m^2\leq C_0\,\EE(f,f)\,\EE(g,g)
\qquad\text{for all finitely supported }f,g:E\to\R .
\end{equation}
\end{df}

\begin{prop}
\label{p_capupper}
Suppose that $\LL$ satisfies a sector condition with constant~$C_0$. Then
\[
\capa(x_0,G_n)\leq C_0\,\capa_s(x_0,G_n)\ \text{ for every }n,
\qquad\text{hence}\quad \capa(x_0)\leq C_0\,\capa_s(x_0).
\]
In particular, if the symmetrised chain~$q_s$ is recurrent, then so
is~$X$.
\end{prop}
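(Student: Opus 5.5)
The plan is to obtain a variational upper bound for $\capa(x_0,G_n)$ by testing the equilibrium potential against the symmetric minimiser, using only the definition of $\EE$ and the sector condition. Let $h:=h_n$ be the equilibrium potential of~$X$, and let $g$ be the minimiser in the Dirichlet principle~\eqref{eq_dirichlet} for $q_s$. Then $g(x_0)=1$, $g=0$ on $G_n$, and $\EE_s(g,g)=\capa_s(x_0,G_n)$. Since $K_n$ is finite, both $h$ and $g$ are supported on $K_n$, so all pairings below are finite sums.

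The first step is the identity
\[
\capa(x_0,G_n)=\EE(h,h)=\langle-\LL h,g\rangle_m .
\]
Because $\LL h=0$ off $\{x_0\}\cup G_n$ and $g=0$ on $G_n$, we get $\langle-\LL h,g\rangle_m=m(x_0)(-\LL h(x_0))g(x_0)=m(x_0)(-\LL h(x_0))$. The same computation with $h$ in place of $g$ (using $h(x_0)=1$, $h=0$ on $G_n$) gives $\EE(h,h)=m(x_0)(-\LL h(x_0))$. So both sides equal $m(x_0)(-\LL h(x_0))$.

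The second step applies the sector condition~\eqref{eq_sector} with $f=h$ and $g$:
\[
\capa(x_0,G_n)^2=\langle\LL h,g\rangle_m^2\le C_0\,\EE(h,h)\,\EE(g,g)=C_0\,\capa(x_0,G_n)\,\EE_s(g,g).
\]
Here I use~\eqref{eq_energyid}, which says $\EE(g,g)=\EE_s(g,g)$. If $\capa(x_0,G_n)>0$, dividing gives $\capa(x_0,G_n)\le C_0\capa_s(x_0,G_n)$; if it is zero, the inequality is trivial. Letting $n\to\8$ gives $\capa(x_0)\le C_0\capa_s(x_0)$. For the final claim: recurrence of $q_s$ means $\capa_s(x_0)=0$ (apply~\eqref{eq_caphit} to $q_s$), hence $\capa(x_0)=0$, and $X$ is recurrent by~\eqref{eq_caphit}.

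The only delicate point is the first identity, namely checking that $\langle-\LL h,g\rangle_m$ reduces to the contribution from $x_0$. This needs $h$ to be harmonic on $K_n\setminus\{x_0\}$, and $\LL h(x)$ to be well defined for $x\in G_n$ adjacent to $K_n$. The latter is not an issue because $g$ vanishes there. Finite support of $h$ and $g$ makes the sector condition applicable, and bounded rates make $\LL h$ well defined everywhere.
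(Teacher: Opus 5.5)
Your proof is correct, but it takes a different route from the paper's. The paper does not argue directly. It quotes Lemma~2.6 of Gaudilli\`ere and Landim, whose proof goes through their non-reversible Dirichlet principle, a variational formula for $\capa$ over pairs of test functions. That proof inserts the symmetric equilibrium potential there and bounds the resulting supremum with the sector condition. The cited lemma assumes the sector condition on the whole $L^2(m)$-domain of the generator. The paper therefore has to add a remark that the relevant test functions may be chosen to vanish off $K_n$, so that \eqref{eq_sector} for finitely supported functions suffices.

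You instead pair the non-reversible equilibrium potential $h_n$ directly with the symmetric one. Harmonicity of $h_n$ on $K_n\setminus\{x_0\}$, together with the common boundary values on $\{x_0\}\cup G_n$, gives $\langle-\LL h_n,g\rangle_m=m(x_0)(-\LL h_n(x_0))=\EE(h_n,h_n)$. One application of \eqref{eq_sector} to these two finitely supported functions then yields $\capa(x_0,G_n)^2\leq C_0\,\capa(x_0,G_n)\,\capa_s(x_0,G_n)$.

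This argument is self-contained and uses no variational principle. You do not even need \eqref{eq_dirichlet}: take $g:=h^s_n$, the equilibrium potential of $q_s$, for which $\EE_s(g,g)=\capa_s(x_0,G_n)$ holds by definition. The finite-support issue also disappears, since only $h_n$ and $h^s_n$ ever enter. The paper's route only buys staying inside the general framework of that reference, whose variational formula is useful for other estimates. For the statement at hand, your argument is the more economical one.
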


This is~\cite[Lemma~2.6]{GL14}, where the sector condition is required
for all $f,g$ in the domain of the generator in $L^2(m)$. Requiring it
only for finitely supported functions, as in~\eqref{eq_sector}, is enough:
the proof in~\cite{GL14} applies the sector condition to the two functions
appearing in the variational formula~\cite[Theorem~2.4]{GL14}, and those,
by~\cite[Remark~3.3]{GL14}, may be taken to vanish outside the finite
set~$K_n$.

\section{The sector condition for Jackson networks}
\label{s_sector}
Throughout this section we assume~\eqref{eq_standing}, so that $\nu=\lambda$
and $\rho_j=\lambda_j/\mu_j$ for every $j\in V$. We now specialise to $X=Q$
and $E=\Z_+^d$. Recalling that $e_*=0$, the transition rates of~$Q$ are,
for $x\in\Z_+^d$,
\[
q(x,x+e_\ell)=a_\ell,\qquad
q(x,x-e_j+e_\ell)=\mu_jp_{j\ell}\,\1{x_j\geq 1},
\]
for $\ell\in V$, respectively for $j\in V$ and $\ell\in\Vs\setminus\{j\}$,
all other off-diagonal rates being zero: these are the external arrivals,
and the service completions at~$j$ followed by a routing to~$\ell$ (or by a
departure from the network, if $\ell=*$). A service completion at~$j$ routed
back to~$j$ leaves the state unchanged and is therefore not a transition.
Let
\[
m(x):=\prod_{j\in V}\rho_j^{x_j},\qquad x\in\Z_+^d ;
\]
this is an invariant measure for~$Q$ by the partial balance property of
Jackson networks, cf.~\cite[Ch.~2]{Kel79} (the verification
of~\eqref{eq_inv} is purely algebraic and does not use the summability
of~$m$). Since $\rho_i=1$ for $i\in B$, we have
$m(x)=\prod_{j\in S}\rho_j^{x_j}$, a quantity depending on~$x_S$ only.
As in Section~\ref{s_cap}, the conductances are $c(x,y)=m(x)q(x,y)$.

\subsection{Transitions, and the base of a transition}

For $j,\ell\in\Vs$ define the \emph{node fluxes}
\begin{equation}
\label{eq_kappa}
\kappa(*,\ell):=a_\ell,\qquad
\kappa(j,\ell):=\lambda_jp_{j\ell}\ \ (j\in V,\ \ell\in\Vs),\qquad
\kappa(*,*):=0 ,
\end{equation}
and put
\[
\Phi(j,\ell):=\kappa(j,\ell)-\kappa(\ell,j),\qquad
\Psi(j,\ell):=\kappa(j,\ell)+\kappa(\ell,j),\qquad j,\ell\in\Vs .
\]
Thus $\kappa(j,\ell)$ is the equilibrium rate at which customers make the
move $j\to \ell$; since $\lambda_j>0$ for every $j\in V$, for $j\neq\ell$ we
have $\kappa(j,\ell)>0$ if and only if $(j,\ell)$ is an arc of the routing
graph.

\begin{lem}
\label{l_flowbal}
$\Phi$ is a divergence-free flow on~$\Vs$, that is, it is antisymmetric and
$\sum_{\ell\in\Vs}\Phi(j,\ell)=0$ for every $j\in\Vs$. Moreover
$|\Phi|\leq\Psi$.
\end{lem}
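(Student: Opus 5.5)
Antisymmetry of $\Phi$ and the bound $|\Phi|\leq\Psi$ are immediate from the definitions. Since $\kappa\geq 0$ (all of $a_\ell$, $\lambda_j$, $p_{j\ell}$ are nonnegative), we have $|\kappa(j,\ell)-\kappa(\ell,j)|\leq\kappa(j,\ell)+\kappa(\ell,j)$. Note also that $\Phi(j,j)=0$ for all $j$, including $j=*$, because $\kappa(*,*)=0$. Since $\Vs$ is finite, absolute summability is automatic. The only real content is the zero-divergence identity, which I would check separately at nodes $j\in V$ and at $*$.

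For $j\in V$, write
\[
\sum_{\ell\in\Vs}\Phi(j,\ell)=\sum_{\ell\in\Vs}\kappa(j,\ell)-\sum_{\ell\in\Vs}\kappa(\ell,j).
\]
The outflow term is $\lambda_j\big(\sum_{\ell\in V}p_{j\ell}+p_{j*}\big)=\lambda_j$, by the definition of $p_{j*}$. The term $\ell=j$ appears in both sums, so it cancels and needs no special care. The inflow term is $\kappa(*,j)+\sum_{i\in V}\kappa(i,j)=a_j+\sum_{i\in V}\lambda_ip_{ij}$, which equals $\lambda_j$ by the traffic equations~\eqref{eq_traffic}. Hence the divergence at $j$ vanishes.

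At the node $*$, I would avoid a separate computation and use antisymmetry instead:
\[
\sum_{j\in\Vs}\sum_{\ell\in\Vs}\Phi(j,\ell)=0.
\]
Since the row sums vanish for every $j\in V$, the row sum at $*$ must vanish as well. As a check, this says $\sum_\ell a_\ell=\sum_j\lambda_jp_{j*}$, i.e.\ total arrivals equal total departures. This also follows directly by summing~\eqref{eq_traffic} over $j$: $\sum_j\lambda_j=|a|+\sum_j\lambda_j(1-p_{j*})$.

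I do not expect a genuine obstacle. The only points needing care are the bookkeeping conventions: $\kappa(j,*)=\lambda_jp_{j*}$, $\kappa(*,*)=0$, and diagonal terms $\kappa(j,j)=\lambda_jp_{jj}$ that may be nonzero but cancel. Under the standing assumption~\eqref{eq_standing}, $\lambda$ is also the throughput $\nu$, but only the linear traffic equation is used here.
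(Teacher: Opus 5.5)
Your proof is correct and essentially the paper's: the check at $j\in V$ via the traffic equations is identical, and $|\Phi|\leq\Psi$ follows from $\kappa\geq 0$ in both. The only difference is at $*$. The paper computes $\sum_{\ell\in V}\kappa(\ell,*)=\sum_n a_n$ directly from~\eqref{eq_traffic}, whereas you get it for free from $\sum_{j,\ell\in\Vs}\Phi(j,\ell)=0$ and the vanishing of the other row sums; this is a valid shortcut, and your ``check'' is exactly the paper's computation.
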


\begin{proof}
For $j\in V$ we have $\sum_\ell\kappa(j,\ell)
=\lambda_j\big(\sum_{\ell\in V}p_{j\ell}+p_{j*}\big)=\lambda_j$ and
$\sum_\ell\kappa(\ell,j)=a_j+\sum_{\ell\in V}\lambda_\ell p_{\ell j}=\lambda_j$
by~\eqref{eq_traffic}. For $j=*$ we have
$\sum_\ell\kappa(*,\ell)=\sum_\ell a_\ell$, whereas, again by~\eqref{eq_traffic},
\[
\sum_{\ell\in V}\kappa(\ell,*)=\sum_{\ell\in V}\lambda_\ell
 -\sum_{\ell,n\in V}\lambda_\ell p_{\ell n}
 =\sum_{\ell\in V}\lambda_\ell-\sum_{n\in V}(\lambda_n-a_n)=\sum_na_n .
\]
The last claim holds because $\kappa\geq 0$.
\end{proof}

Define $\mathcal D:=\{e_\ell-e_j:\ j\neq \ell\in\Vs\}\subset\Z^d$, recalling
the convention $e_*=0$.

\begin{lem}
\label{l_base}
\begin{itemize}
\item[(a)] Let $x\neq y$ in~$\Z_+^d$. If $y-x\notin\mathcal D$, then
$q(x,y)=q(y,x)=0$. If $y-x\in\mathcal D$, then there is a unique triple
$(z,j,\ell)$ with $z\in\Z_+^d$ and $j\neq\ell$ in~$\Vs$ such that
\[
x=z+e_j,\qquad y=z+e_\ell ,
\]
namely, $z=x\wedge y$. Being symmetric in $x$ and~$y$, the
configuration~$z$ depends on the unordered pair $\{x,y\}$ only; we call it
the \emph{base} of $\{x,y\}$.
\item[(b)] For every $z\in\Z_+^d$ and every $j\neq \ell$ in~$\Vs$,
\begin{equation}
\label{eq_cbase}
c(z+e_j,z+e_\ell)=m(z)\,\kappa(j,\ell),
\end{equation}
and consequently, by~\eqref{eq_ca} and~\eqref{eq_cs},
\begin{equation}
\label{eq_csca}
c_s(z+e_j,z+e_\ell)=\tfrac12m(z)\Psi(j,\ell),\qquad
c_a(z+e_j,z+e_\ell)=\tfrac12m(z)\Phi(j,\ell).
\end{equation}
Moreover, $c(x,y)=c_s(x,y)=c_a(x,y)=0$ whenever $x\neq y$ and
$y-x\notin\mathcal D$.
\end{itemize}
\end{lem}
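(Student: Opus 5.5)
The plan is a direct verification from the explicit transition rates listed at the start of Section~\ref{s_sector}, with the cases organised according to whether $j$ or $\ell$ equals~$*$.

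For part~(a), the first step is to note that every transition of~$Q$ moves $x$ to $x+e_\ell-e_j$ for some $j\neq\ell$ in~$\Vs$. This covers the arrival $x\mapsto x+e_\ell$ (the case $j=*$), the routing $x\mapsto x-e_j+e_\ell$, and the departure $x\mapsto x-e_j$ (the case $\ell=*$). Hence $q(x,y)>0$ forces $y-x\in\mathcal D$. Since $\mathcal D=-\mathcal D$, the same reasoning gives $q(y,x)=0$ whenever $y-x\notin\mathcal D$.

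For uniqueness, suppose $y-x=e_\ell-e_j$ and that $x=z+e_j$, $y=z+e_\ell$. Then $x\wedge y=z+(e_j\wedge e_\ell)$ componentwise. Because $j\neq\ell$, and because $e_*=0$ has no positive coordinate, we get $e_j\wedge e_\ell=0$, so $z=x\wedge y$ is forced. Once $z$ is fixed, $e_j=x-z$ determines $j$: it is $*$ if $x=z$, and otherwise the unique node where $x$ exceeds~$z$. The same argument determines $\ell$ from $y-z$.

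For existence, given $y-x=e_\ell-e_j$, set $z:=x-e_j$. If $j\in V$, we must check $z\in\Z_+^d$, that is, $x_j\geq1$. This follows from $y_j=x_j-1\geq0$, which uses $\ell\neq j$. Then $y=z+e_\ell$ as required. Symmetry of $z$ in $x,y$ is immediate from $z=x\wedge y$.

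For part~(b), I would compute $c(z+e_j,z+e_\ell)=m(z+e_j)\,q(z+e_j,z+e_\ell)$ in two cases. If $j=*$, then $m(z+e_*)=m(z)$ and $q=a_\ell$, which gives $m(z)\kappa(*,\ell)$. If $j\in V$, then $m(z+e_j)=\rho_j m(z)$, and $q=\mu_jp_{j\ell}\,\1{z_j+1\geq1}=\mu_jp_{j\ell}$. The product is $m(z)\lambda_jp_{j\ell}=m(z)\kappa(j,\ell)$, using $\rho_j\mu_j=\lambda_j$; this holds under~\eqref{eq_standing}, by Proposition~\ref{p_through}, and covers $\ell=*$ as well. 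Exchanging the roles of $j$ and~$\ell$ with the same base~$z$ gives $c(z+e_\ell,z+e_j)=m(z)\kappa(\ell,j)$. Formulas~\eqref{eq_csca} then follow by taking half-sums and half-differences as in~\eqref{eq_cs} and~\eqref{eq_ca}. The final claim follows from part~(a) together with the definition $c=mq$.

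No step is a genuine obstacle. The only points needing care are the convention $e_*=0$ in the uniqueness argument, and checking that the indicator $\1{x_j\geq1}$ is automatically $1$ when $x=z+e_j$.
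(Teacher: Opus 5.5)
Your proposal is correct and follows essentially the same route as the paper. In part (a) you use $e_j\wedge e_\ell=0$ to force $z=x\wedge y$; for existence you take $z=x-e_j$ and check $x_j\geq 1$, whereas the paper reads off $x-x\wedge y=(x-y)^+=e_j$, which is only a cosmetic difference. In part (b) you give the same two-case computation of $m(z+e_j)\,q(z+e_j,z+e_\ell)$, using $\rho_j\mu_j=\lambda_j$.
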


\begin{proof}
(a) The first claim is immediate from the list of transition rates. For the
second one, note that $e_j\wedge e_\ell=0$ for $j\neq\ell$ in~$\Vs$. Hence,
if $y-x=e_\ell-e_j$, then $z:=x\wedge y$ satisfies $x-z=(x-y)^+=e_j$ and
$y-z=(y-x)^+=e_\ell$; and any representation $x=z'+e_{j'}$, $y=z'+e_{\ell'}$
with $j'\neq\ell'$ forces $z'=x\wedge y=z$, and then $e_{j'}=e_j$,
$e_{\ell'}=e_\ell$.

(b) Write $\rho_*:=1$, so that $m(z+e_j)=\rho_jm(z)$ for every $j\in\Vs$.
If $j=*$, then $q(z,z+e_\ell)=a_\ell$, so that
$c(z,z+e_\ell)=m(z)a_\ell=m(z)\kappa(*,\ell)$. If $j\in V$, then
$q(z+e_j,z+e_\ell)=\mu_jp_{j\ell}$, because $(z+e_j)_j\geq 1$; hence
$c(z+e_j,z+e_\ell)=m(z)\rho_j\mu_jp_{j\ell}=m(z)\lambda_jp_{j\ell}$, since
$\rho_j\mu_j=\lambda_j$. This is~\eqref{eq_cbase}, and~\eqref{eq_csca}
follows. The last claim follows from part~(a).
\end{proof}

Thus the transitions of~$Q$ with base~$z$ are exactly the moves of
\emph{one} customer, added to the configuration~$z$, along an arc of the
routing graph, and their conductances are $m(z)$ times the node
fluxes. For $z\in\Z_+^d$ we write
\[
\NN(z):=\{z+e_n:\ n\in\Vs\}\subset\Z_+^d .
\]

\subsection{The antisymmetric part is a superposition of short cycles}

Let $A$ be a countable set. A \emph{simple cycle} in~$A$ is a sequence
$\gamma=(v_0,v_1,\dots,v_L=v_0)$ of elements of~$A$ with $L\geq 3$ and
$v_0,\dots,v_{L-1}$ pairwise distinct; we write $L_\gamma:=L$ for its
length. Its \emph{cycle flow} is the flow on~$A$ given by
\[
\chi_\gamma(x,y):=\sum_{i<L}\big(\1{(x,y)=(v_i,v_{i+1})}
 -\1{(x,y)=(v_{i+1},v_i)}\big),\qquad x,y\in A,
\]
cf.~\cite[(2.14)]{GL14}. That is, $\chi_\gamma(x,y)=1$ if $(x,y)$ is one of
the steps $(v_i,v_{i+1})$ of~$\gamma$, $\chi_\gamma(x,y)=-1$ if $(y,x)$ is,
and $\chi_\gamma(x,y)=0$ otherwise (the first two cases exclude each other
because $L\geq 3$); clearly, $\chi_\gamma$ is divergence free. We shall use
simple cycles in $A=\Vs$, whose lengths are at most $d+1$, and in
$A=\Z_+^d$.

\begin{lem}
\label{l_flowdec}
There are finitely many simple cycles $\gamma_1,\dots,\gamma_M$ in~$\Vs$
and weights $w_1,\dots,w_M>0$ such that
$\Phi=\sum_{u\leq M}w_u\chi_{\gamma_u}$, and such that $\Phi$ is strictly
positive along every step of every~$\gamma_u$. In particular, since
$\kappa\geq\Phi$, every step of every~$\gamma_u$ is an arc of the routing
graph.
\end{lem}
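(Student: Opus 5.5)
We prove Lemma~\ref{l_flowdec} by the standard greedy cycle-peeling argument for divergence-free flows on a finite set, applied to $\Phi$ on $\Vs$, which is divergence free by Lemma~\ref{l_flowbal}. We argue by induction on the number of unordered pairs $\{j,\ell\}$ with $\Phi(j,\ell)\neq0$. If this number is zero, we take $M=0$.

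Suppose $\Phi\neq0$ and consider the directed graph $D$ on $\Vs$ whose arcs are the pairs $(j,\ell)$ with $\Phi(j,\ell)>0$. If a vertex $j$ has an incoming arc, then $\sum_\ell\Phi(j,\ell)=0$ forces some $\ell$ with $\Phi(j,\ell)>0$, so $j$ also has an outgoing arc. Starting from any arc and following outgoing arcs, finiteness of $\Vs$ forces a vertex to repeat. The first repetition gives a directed cycle $v_0\to v_1\to\dots\to v_L=v_0$ in $D$ with $v_0,\dots,v_{L-1}$ pairwise distinct. Its length satisfies $L\geq3$: loops are excluded because $\Phi(j,j)=0$, and $L=2$ is excluded because $\Phi(j,\ell)>0$ and $\Phi(\ell,j)>0$ cannot both hold by antisymmetry. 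This cycle $\gamma$ is therefore a simple cycle in the sense above.

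Set $w:=\min_{i<L}\Phi(v_i,v_{i+1})>0$ and $\Phi':=\Phi-w\chi_\gamma$. Then $\Phi'$ is antisymmetric and divergence free. On each step of $\gamma$ we have $0\le\Phi'(v_i,v_{i+1})<\Phi(v_i,v_{i+1})$, with equality to $0$ for at least one step, and $\Phi'=\Phi$ off the pairs of $\gamma$. Hence $\Phi'$ has the same sign as $\Phi$ wherever $\Phi'\neq0$, and strictly fewer nonzero pairs. By induction $\Phi'=\sum_u w_u\chi_{\gamma_u}$ with each $\gamma_u$ a simple cycle along whose steps $\Phi'>0$.

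It remains to check positivity of $\Phi$ along the steps. On the steps of the new cycle $\gamma$ this holds by construction. On a step $(x,y)$ of an old cycle $\gamma_u$, we have $\Phi'(x,y)>0$. Since $\Phi'$ either agrees with $\Phi$ at $(x,y)$ or lies between $0$ and $\Phi$ there, this gives $\Phi(x,y)>0$. Adjoining $(\gamma,w)$ to the list completes the induction.

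Finally, every step of every $\gamma_u$ satisfies $\kappa(j,\ell)\ge\Phi(j,\ell)>0$ with $j\neq\ell$, and $\kappa(*,*)=0$ is never involved. So each step is an arc of the routing graph, and each cycle has length at most $|\Vs|=d+1$.

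The only delicate point is maintaining the sign compatibility through the induction, which is what guarantees positivity of $\Phi$ on every step; the rest is routine.
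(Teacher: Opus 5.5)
Your proof is correct and follows the paper's argument essentially step for step. Both induct on the number of nonzero pairs and extract a simple cycle of strictly positive steps, which has $L\geq3$ because $\Phi(j,j)=0$ and $\Phi$ is antisymmetric; both then subtract $w\chi_\gamma$ with $w$ the minimum of $\Phi$ along that cycle. Your check that $\Phi'$ is positive only where $\Phi$ is matches the paper's own remark, and it gives the positivity of $\Phi$ along every step of every cycle.
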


\begin{proof}
This is the standard decomposition of a divergence-free flow on a finite
set; we recall the argument. We induct on the number of pairs $(j,\ell)$
with $\Phi(j,\ell)\neq 0$. If $\Phi\neq 0$, pick $(n_0,n_1)$ with
$\Phi(n_0,n_1)>0$. Since $\sum_\ell\Phi(n_1,\ell)=0$ and $\Phi(n_1,n_0)<0$,
there is~$n_2$ with $\Phi(n_1,n_2)>0$; continuing in this way, the
finiteness of~$\Vs$ forces a repetition, and the first repetition closes a
simple cycle~$\gamma$ along all of whose steps $\Phi$ is strictly positive.
Its length is at least~$3$, because $\Phi(j,j)=0$, and $\Phi(j,\ell)>0$ and
$\Phi(\ell,j)>0$ cannot hold simultaneously. Setting $w:=\min\Phi$ over the
steps of~$\gamma$, the flow $\Phi-w\chi_\gamma$ is divergence free, it is
strictly positive only on pairs where~$\Phi$ is, and it vanishes on at
least one more pair.
\end{proof}

For a simple cycle $\gamma=(n_0,\dots,n_L)$ in~$\Vs$ and $z\in\Z_+^d$, the
\emph{lift} $\gamma^z:=(z+e_{n_0},\dots,z+e_{n_L})$ is a simple cycle
in~$\Z_+^d$ with vertices in~$\NN(z)$.
Then,
$\chi_{\gamma^z}(z+e_j,z+e_\ell)=\chi_\gamma(j,\ell)$ for all $j\neq\ell$
in~$\Vs$; and, every step of~$\gamma^z$ being of the form
$(z+e_j,z+e_\ell)$, Lemma~\ref{l_base}(a) shows that
$\chi_{\gamma^z}(x,y)=0$ unless $y-x\in\mathcal D$ and $x\wedge y=z$.
By~\eqref{eq_cbase}, if every step of~$\gamma$ is an arc of the routing
graph (as is the case for the cycles~$\gamma_u$ of Lemma~\ref{l_flowdec}),
then every step of~$\gamma^z$ is a transition of~$Q$. It is worth stressing
that the lift of a cycle passing through~$*$ is again a \emph{closed} cycle
of~$\Z_+^d$: the extra customer enters the network at~$z$ and later leaves
it, returning the configuration to~$z$.

\begin{lem}
\label{l_cycles}
With $\gamma_u,w_u$ as in Lemma~\ref{l_flowdec},
\begin{equation}
\label{eq_cadec}
c_a(x,y)=\tfrac12\,m(x\wedge y)\sum_{u\leq M}w_u\,
 \chi_{\gamma_u^{x\wedge y}}(x,y)\qquad\text{for all }x\neq y\in\Z_+^d .
\end{equation}
\end{lem}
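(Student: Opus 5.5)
The plan is to verify \eqref{eq_cadec} by splitting into two cases according to whether $y-x\in\mathcal D$, and in each case to compute both sides directly.

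\emph{Case $y-x\notin\mathcal D$.} Here the left-hand side vanishes by Lemma~\ref{l_base}(b). For the right-hand side, recall the observation made just before the lemma: for any simple cycle $\gamma$ in~$\Vs$ and any $z$, the cycle flow $\chi_{\gamma^z}(x,y)$ is zero unless $y-x\in\mathcal D$ and $x\wedge y=z$. Taking $z=x\wedge y$, every term of the sum vanishes, so both sides are zero.

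\emph{Case $y-x\in\mathcal D$.} By Lemma~\ref{l_base}(a) we can write $x=z+e_j$ and $y=z+e_\ell$ with $z=x\wedge y$ and $j\neq\ell$ in~$\Vs$. Then \eqref{eq_csca} gives $c_a(x,y)=\tfrac12 m(z)\Phi(j,\ell)$. On the other side, the lifting identity $\chi_{\gamma_u^{z}}(z+e_j,z+e_\ell)=\chi_{\gamma_u}(j,\ell)$ turns the right-hand side into
\[
\tfrac12 m(z)\sum_{u\le M}w_u\chi_{\gamma_u}(j,\ell).
\]
By Lemma~\ref{l_flowdec} this equals $\tfrac12 m(z)\Phi(j,\ell)$, which matches the left-hand side.

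The only step requiring care is the lifting identity $\chi_{\gamma^z}(z+e_j,z+e_\ell)=\chi_\gamma(j,\ell)$, together with the vanishing statement used in the first case. I would justify both by noting that the map $n\mapsto z+e_n$ is injective on~$\Vs$, since $e_*=0$ differs from every $e_n$. Hence $(z+e_{n_i},z+e_{n_{i+1}})=(z+e_j,z+e_\ell)$ holds exactly when $(n_i,n_{i+1})=(j,\ell)$, and the same holds for reversed steps. For a pair $(x,y)$ with base different from~$z$, no step of $\gamma^z$ can coincide with $(x,y)$ or $(y,x)$, because by Lemma~\ref{l_base}(a) the base of every step of $\gamma^z$ is~$z$. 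With this settled, the rest of the argument is bookkeeping, so I expect no real obstacle.
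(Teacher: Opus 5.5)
Your proposal is correct and matches the paper's proof: the same case split on whether $y-x\in\mathcal D$. In the first case, Lemma~\ref{l_base}(b) and the vanishing of the lifted cycle flows handle it. In the second, the lifting identity, $\Phi=\sum_u w_u\chi_{\gamma_u}$ and \eqref{eq_csca} finish it. Your justification of the lifting identity via injectivity of $n\mapsto z+e_n$ on $\Vs$ only spells out what the paper states in the paragraph before the lemma.
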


\begin{proof}
If $y-x\notin\mathcal D$, both sides vanish, by Lemma~\ref{l_base}(b) and
the preceding paragraph. Otherwise, $x=z+e_j$ and $y=z+e_\ell$ with
$z=x\wedge y$ and $j\neq\ell$, and the right-hand side
of~\eqref{eq_cadec} equals
$\frac12m(z)\sum_uw_u\chi_{\gamma_u}(j,\ell)=\frac12m(z)\Phi(j,\ell)$, which
is $c_a(x,y)$ by~\eqref{eq_csca}.
\end{proof}

\subsection{A local Poincar\'e inequality, and the sector condition}

Put
\[
C_\flat:=\tfrac12\min\big\{\kappa(j,\ell):\ (j,\ell)\text{ is an arc of the
routing graph}\big\}>0 .
\]
By~\eqref{eq_csca}, and since $\Psi\geq\kappa$, every pair
$\{z+e_j,z+e_\ell\}$ such that $(j,\ell)$ is an arc of the routing graph
satisfies $c_s(z+e_j,z+e_\ell)\geq C_\flat\,m(z)$.

By an \emph{edge} we mean an unordered pair $b=\{x,y\}$ of distinct points
of~$\Z_+^d$, and we write $c_s(b):=c_s(x,y)$. For $f:\Z_+^d\to\R$ we write
$\osc_Af:=\max_Af-\min_Af$ and $(\nabla_bf)^2:=(f(y)-f(x))^2$.
By~\eqref{eq_energyid}, for finitely supported~$f$,
\begin{equation}
\label{eq_Dedges}
\EE(f,f)=\sum_bc_s(b)(\nabla_bf)^2
 =\sum_{z\in\Z_+^d}\ \sum_{b\text{ with base }z}c_s(b)(\nabla_bf)^2,
\end{equation}
the sums being over edges; the second equality holds because, by
Lemma~\ref{l_base}, $c_s$ vanishes on the edges which have no base, and
every other edge has exactly one base.

\begin{lem}
\label{l_poincare}
For every finitely supported~$f$,
\[
\sum_{z\in\Z_+^d}m(z)\,\big(\osc_{\NN(z)}f\big)^2
 \leq\frac{4d}{C_\flat}\,\EE(f,f).
\]
\end{lem}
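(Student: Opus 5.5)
Fix $z\in\Z_+^d$ and work on the finite set $\NN(z)=\{z+e_n:n\in\Vs\}$, which has $d+1$ points. The plan is to bound $(\osc_{\NN(z)}f)^2$ by the energy of $f$ on the edges with base~$z$, multiplied by a constant over $m(z)$. We then sum over~$z$ and invoke~\eqref{eq_Dedges}. Since every edge of $\NN(z)$ of the form $\{z+e_j,z+e_\ell\}$ has base exactly~$z$ by Lemma~\ref{l_base}(a), the sum over~$z$ of these local energies is precisely $\EE(f,f)$, with no overcounting.

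To get the local bound, let $x^+,x^-\in\NN(z)$ be points where $f$ attains its maximum and minimum on $\NN(z)$, say $x^\pm=z+e_{n_\pm}$. Irreducibility gives a directed path in the routing graph from $n_+$ to $*$ and from $*$ to $n_-$. Hence there is a path in the routing graph, viewed as an undirected graph, joining $n_+$ to $n_-$. We may take each of the two pieces to be simple, so each has at most $d$ arcs and the concatenation has at most $2d$ arcs. Lifting to $z+e_{(\cdot)}$ gives a path $x^+=y_0,y_1,\dots,y_k=x^-$ inside $\NN(z)$ with $k\le 2d$. Each step $\{y_{i-1},y_i\}$ is an edge with base~$z$ coming from an arc, so $c_s(y_{i-1},y_i)\ge C_\flat m(z)$.

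Telescoping and Cauchy--Schwarz then give
\[
(\osc_{\NN(z)}f)^2=\Big(\sum_{i\le k}(f(y_{i-1})-f(y_i))\Big)^2\le k\sum_{i\le k}(\nabla_{\{y_{i-1},y_i\}}f)^2\le \frac{2d}{C_\flat m(z)}\sum_{b\text{ with base }z}c_s(b)(\nabla_bf)^2 .
\]
In the last step, the edges of the path may repeat when the two simple pieces share an edge. A repeated edge is counted at most twice, which loses at most one more factor~$2$ and yields the stated constant $4d/C_\flat$. One could avoid this loss by extracting a simple path, but the crude factor is enough.

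Multiplying by $m(z)$ and summing over~$z$ using~\eqref{eq_Dedges} gives the claim. All sums are finite because $f$ is finitely supported, so $\osc_{\NN(z)}f=0$ for all but finitely many~$z$. The only point needing care is the connectivity of $\NN(z)$ through edges with positive conductance and with path length bounded uniformly in~$z$. This is where routing through~$*$ is essential: the lift of $*$ is $z$ itself, so the path may pass through the base configuration, and irreducibility supplies the path.
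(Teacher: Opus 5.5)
Your proof is correct and follows essentially the paper's argument: you lift routing-graph paths through $*$ (that is, through the base configuration $z$) to paths of base-$z$ edges with $c_s\geq C_\flat m(z)$, apply Cauchy--Schwarz, and sum over $z$ via \eqref{eq_Dedges}. The paper differs only in that it splits at $z$, bounding each $|f(z+e_j)-f(z)|$ along a drain path of length $\leq d$ and then using $\osc_{\NN(z)}f\leq 2\max_j|f(z+e_j)-f(z)|$, rather than concatenating one max-to-min path of length $\leq 2d$; note also that the factor $2d/(C_\flat m(z))$ in your display holds only when no edge repeats (as you correct right afterwards), and that extracting a simple path in the undirected routing graph on the $d+1$ points of $\Vs$ would in fact give the better constant $d/C_\flat$.
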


\begin{proof}
Fix~$z$ and $j\in V$. Since $j$ can be drained, the routing graph contains
a directed path $j=n_0,n_1,\dots,n_r=*$ with $n_0,\dots,n_r$ pairwise
distinct, so that $r\leq d$. Its lift $z+e_{n_0},\dots,z+e_{n_r}=z$ consists
of the edges $b_i:=\{z+e_{n_i},z+e_{n_{i+1}}\}$, $i<r$, which have
base~$z$ and satisfy $c_s(b_i)\geq C_\flat m(z)$. Hence, by the
Cauchy--Schwarz inequality,
\[
m(z)\big(f(z+e_j)-f(z)\big)^2\leq r\,m(z)\sum_{i<r}(\nabla_{b_i}f)^2
\leq\frac{d}{C_\flat}\sum_{b\text{ with base }z}c_s(b)(\nabla_bf)^2 .
\]
Since $\osc_{\NN(z)}f\leq 2\max_{j\in V}|f(z+e_j)-f(z)|$, the same bound
with an extra factor~$4$ holds for $m(z)(\osc_{\NN(z)}f)^2$. It remains to
sum over~$z$ and to use~\eqref{eq_Dedges}.
\end{proof}

\begin{prop}
\label{p_sector}
The generator of~$Q$ satisfies the sector condition~\eqref{eq_sector} with
respect to~$m$, with
\[
C_0=\Big(1+\frac{4d\,W}{C_\flat}\Big)^2,
\qquad W:=\sum_{u\leq M}w_uL_{\gamma_u} ,
\]
where $M$, $\gamma_u$ and $w_u$ are as in Lemma~\ref{l_flowdec}.
\end{prop}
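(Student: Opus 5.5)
We split $\langle\LL f,g\rangle_m$ into a symmetric and an antisymmetric part. For finitely supported $f,g$,
\[
\langle-\LL f,g\rangle_m=\sum_{x,y}c(x,y)\big(f(x)-f(y)\big)g(x),
\]
and writing $c=c_s+c_a$ gives $\langle-\LL f,g\rangle_m=\EE_s(f,g)+R(f,g)$. Here $\EE_s(f,g)=\sum_b c_s(b)\nabla_bf\,\nabla_bg$ is the symmetric bilinear form, and
\[
R(f,g)=\sum_{x,y}c_a(x,y)\big(f(x)-f(y)\big)g(x).
\]
By Cauchy--Schwarz and \eqref{eq_energyid}, $|\EE_s(f,g)|\le\EE(f,f)^{1/2}\EE(g,g)^{1/2}$. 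It therefore suffices to show
\[
|R(f,g)|\le\frac{4dW}{C_\flat}\,\EE(f,f)^{1/2}\EE(g,g)^{1/2},
\]
and squaring the sum of the two bounds gives the constant $C_0$ claimed in Proposition~\ref{p_sector}.

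To control $R$, we first use that $c_a$ is divergence free, \eqref{eq_divfree}. This gives $\sum_{x,y}c_a(x,y)f(x)g(x)=0$, so $R(f,g)=-\sum_{x,y}c_a(x,y)f(y)g(x)$. We then insert the decomposition \eqref{eq_cadec} of Lemma~\ref{l_cycles}, reorganised by base:
\[
R(f,g)=-\tfrac12\sum_{z}m(z)\sum_{u\le M}w_u\sum_{x,y}\chi_{\gamma_u^z}(x,y)f(y)g(x).
\]
Each lifted cycle flow $\chi_{\gamma_u^z}$ is divergence free. Hence in the inner sum we may replace $f$ by $f-f(z')$ and $g$ by $g-g(z'')$ for arbitrary constants, for instance values of $f$ and $g$ at some point of $\NN(z)$.

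The inner sum is a sum over the $L_{\gamma_u}$ steps of the cycle, counted in both directions: $\sum_i\big(f(v_{i+1})g(v_i)-f(v_i)g(v_{i+1})\big)$. After centring, each term is bounded by $2\,\osc_{\NN(z)}f\,\osc_{\NN(z)}g$, because all vertices of $\gamma_u^z$ lie in $\NN(z)$. This yields
\[
|R(f,g)|\le\sum_z m(z)\,W\,\osc_{\NN(z)}f\,\osc_{\NN(z)}g.
\]
Cauchy--Schwarz in $z$ together with Lemma~\ref{l_poincare} applied to $f$ and to $g$ then gives
\[
|R(f,g)|\le W\cdot\frac{4d}{C_\flat}\,\EE(f,f)^{1/2}\EE(g,g)^{1/2},
\]
which is the required bound.

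The main obstacle is the constant bookkeeping in the cycle estimate, in particular making sure the factors $\tfrac12$ and $2$ match. A cleaner route is to write each term as $f(v_{i+1})g(v_i)-f(v_i)g(v_{i+1})=(f(v_{i+1})-f(v_i))g(v_i)-f(v_i)(g(v_{i+1})-g(v_i))$ and then centre $g$ and $f$ at a common point. If the constant comes out slightly larger, that does not affect the qualitative use of the sector condition. A secondary point to check is that all sums are finite, so that the rearrangements are justified. This holds because $f$ and $g$ are finitely supported and each $z$ contributes only finitely many cycles of bounded length.
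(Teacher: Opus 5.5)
Your proposal is correct and follows essentially the same route as the paper: the symmetric part is bounded by Cauchy--Schwarz, divergence-freeness of $c_a$ removes the diagonal term, Lemma~\ref{l_cycles} decomposes the rest into lifted cycles, you centre on $\NN(z)$, and you finish with Lemma~\ref{l_poincare}; the only difference is that you centre at a point value while the paper centres at the cycle average, which gives the same bound. Your bookkeeping already yields exactly $\tfrac12\cdot 2L_{\gamma_u}=L_{\gamma_u}$, hence the stated $C_0$, so the hedging in your last paragraph is unnecessary.
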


\begin{proof}
Let $f,g$ be finitely supported, so that all the sums below are finite.
Since $m(x)\LL f(x)=\sum_yc(x,y)(f(y)-f(x))$ and $c=c_s+c_a$,
\begin{align*}
\langle\LL f,g\rangle_m&=\sum_{x,y}c_s(x,y)\big(f(y)-f(x)\big)g(x)\\
&\qquad+\sum_{x,y}c_a(x,y)f(y)g(x)
 -\sum_xf(x)g(x)\sum_yc_a(x,y).
\end{align*}
The last sum vanishes by~\eqref{eq_divfree}. By the symmetry of~$c_s$,
the first one equals
\[
-\tfrac12\sum_{x,y}c_s(x,y)\big(f(y)-f(x)\big)\big(g(y)-g(x)\big),
\]
whose absolute value is at most 
$\big(\EE(f,f)\EE(g,g)\big)^{1/2}$ by the
Cauchy--Schwarz inequality and~\eqref{eq_energyid}.

For the middle sum, we define, for a simple cycle
$\gamma=(v_0,\dots,v_L)$ in~$\Z_+^d$,
\[
\beta_\gamma(g,f):=\sum_{x,y}\chi_\gamma(x,y)\,g(x)f(y)
=\sum_{i<L}\big[g(v_i)f(v_{i+1})-g(v_{i+1})f(v_i)\big].
\]
Since $\chi_{\gamma^z}(x,y)=0$ unless $z=x\wedge y$, Lemma~\ref{l_cycles}
gives
\[
\sum_{x,y}c_a(x,y)f(y)g(x)
 =\tfrac12\sum_zm(z)\sum_{u\leq M}w_u\,\beta_{\gamma_u^z}(g,f),
\]
where only finitely many~$z$ contribute. The bilinear form $\beta_\gamma$
vanishes as soon as one of its two arguments is constant on the set
$\{v_0,\dots,v_{L-1}\}$: if $f\equiv\bar f$ there, then
$\beta_\gamma(g,f)=\bar f\sum_i(g(v_i)-g(v_{i+1}))=0$, and symmetrically.
Hence, subtracting from~$f$ and~$g$ their averages over the vertices of
$\gamma_u^z$, which lie in~$\NN(z)$, and bounding each of the $2L_{\gamma_u}$
resulting products,
\[
\big|\beta_{\gamma_u^z}(g,f)\big|
 \leq 2L_{\gamma_u}\,\osc_{\NN(z)}g\ \osc_{\NN(z)}f .
\]
Therefore, by the Cauchy--Schwarz inequality and
Lemma~\ref{l_poincare},
\begin{align*}
\Big|\sum_{x,y}c_a(x,y)f(y)g(x)\Big|
&\leq W\sum_zm(z)\,\osc_{\NN(z)}f\ \osc_{\NN(z)}g\\
&\leq\frac{4dW}{C_\flat}\,\EE(f,f)^{1/2}\EE(g,g)^{1/2}.
\end{align*}
Adding the two bounds gives
$|\langle\LL f,g\rangle_m|\leq\sqrt{C_0}\,\EE(f,f)^{1/2}\EE(g,g)^{1/2}$.
\end{proof}

\begin{theo}
\label{t_compare}
Assume~\eqref{eq_standing}, let $q_s$ be the symmetrisation of~$Q$ with respect to~$m$,
let $G_n$ be a sequence of sets as in Section~\ref{s_cap},
and let $C_0$ be the constant of
Proposition~\ref{p_sector}. Then
\[
\capa_s(x_0,G_n)\ \leq\ \capa(x_0,G_n)\ \leq\ C_0\,\capa_s(x_0,G_n)
\quad\text{for every }n .
\]
In particular, $Q$ is recurrent if and only if its symmetrisation is
recurrent.
\end{theo}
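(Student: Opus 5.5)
This theorem assembles results already established, so the plan is to check that their hypotheses hold for $X=Q$ and then chain them together.

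First, I would verify that $Q$ fits the framework of Section~\ref{s_cap}. The chain is irreducible on the countable set $E=\Z_+^d$ by standing assumption. Its jump rates are bounded, since they are at most $\sum_\ell a_\ell+\sum_j\mu_j$. The measure $m(x)=\prod_j\rho_j^{x_j}$ is invariant, and it satisfies $0<m(x)<\8$ because every $\rho_j>0$ (recall $\lambda_j>0$). Consequently~\eqref{eq_capineq}, which is \cite[Lemma~2.5]{GL14}, applies directly and yields the lower bound $\capa_s(x_0,G_n)\leq\capa(x_0,G_n)$ for every~$n$.

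For the upper bound, Proposition~\ref{p_sector} shows that the generator of~$Q$ satisfies the sector condition~\eqref{eq_sector} with the constant~$C_0$ given there. This holds for all finitely supported $f,g$, which is exactly the form required in Definition~\ref{d_sector}. Proposition~\ref{p_capupper} therefore gives $\capa(x_0,G_n)\leq C_0\capa_s(x_0,G_n)$ for every~$n$.

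To obtain the equivalence, I would let $n\to\8$ in both inequalities. The limits exist, being monotone in~$n$ since the $G_n$ decrease. This gives $\capa_s(x_0)\leq\capa(x_0)\leq C_0\capa_s(x_0)$. Hence $\capa(x_0)>0$ if and only if $\capa_s(x_0)>0$. By the criterion following~\eqref{eq_caphit}, applied both to~$Q$ and to its symmetrisation, $Q$ is transient if and only if~$q_s$ is.

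Applying that criterion to~$q_s$ requires two facts. The symmetrisation has bounded rates, because it has the same total rates as~$Q$. It is also irreducible, because $c_s(x,y)>0$ whenever $c(x,y)>0$, and $Q$ is irreducible. I expect no real obstacle here, since the substantive work sits in Proposition~\ref{p_sector}. The only point needing care is confirming that $G_n$ and $x_0$ are chosen identically for both chains, so that the capacities being compared are the same objects.
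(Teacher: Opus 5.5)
Your proposal is correct and follows the paper's proof: the lower bound is \eqref{eq_capineq}, and the upper bound comes from feeding the sector condition of Proposition~\ref{p_sector} into Proposition~\ref{p_capupper}. Your additional checks are the details the paper leaves implicit when it passes from the capacity bounds to the recurrence equivalence: bounded rates and $0<m<\infty$ for $Q$, irreducibility and bounded rates for $q_s$, and monotonicity of the capacities in~$n$.
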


\begin{proof}
Combine~\eqref{eq_capineq} with Propositions~\ref{p_sector}
and~\ref{p_capupper}.
\end{proof}

\section{The symmetrised network, and the proof of Theorem~\ref{t_main}}
\label{s_sym}

By~\eqref{eq_cs}, \eqref{eq_csca} and Lemma~\ref{l_base}(b), the
symmetrisation of~$Q$ is the reversible chain on~$\Z_+^d$ with conductances
\begin{equation}
\label{eq_symcond}
c_s(z+e_j,z+e_\ell)=\tfrac12\,m(z)\,\Psi(j,\ell),\quad
z\in\Z_+^d,\ j\neq \ell\in\Vs,
\end{equation}
all other conductances being zero. 
Note that the process with rates~$q_s$ is itself the queue-length process of an open Jackson
network, with the same service rates~$\mu_j$, external arrival rates
$a^s_\ell:=\frac12\Psi(*,\ell)$, $\ell\in V$, and routing probabilities
\[
p^s_{j\ell}:=\frac{\Psi(j,\ell)}{2\lambda_j},\qquad j\in V,\ \ell\in\Vs
\]
(so that $p^s_{jj}=p_{jj}$). Indeed, $m(z+e_j)=\rho_jm(z)$ for $j\in V$,
so that~\eqref{eq_symcond} gives $q_s(z+e_j,z+e_\ell)=\mu_jp^s_{j\ell}$
and $q_s(z,z+e_\ell)=a^s_\ell$. By the proof of Lemma~\ref{l_flowbal},
$\sum_{\ell\in\Vs}\Psi(j,\ell)=2\lambda_j$ for $j\in V$; hence
$\sum_{\ell\in\Vs}p^s_{j\ell}=1$, and, $\Psi$ being symmetric, $\lambda$
solves the traffic equations $\lambda=a^s+\lambda P^s$ of this network.
The latter therefore has the same loads~$\rho_j$, and the same sets~$B$
and~$S$, as the original one; it is reversible, because
$\lambda_jp^s_{j\ell}=\frac12\Psi(j,\ell)$ is symmetric in~$j$ and~$\ell$.
(For ergodic networks this was observed in~\cite[Sect.~3]{MX15}.) Thus,
Theorem~\ref{t_compare} reduces the question of recurrence to reversible
Jackson networks with the same~$B$ and~$S$.
The essential point is that this
network is invariant under the translations $x\mapsto x+e_i$, $i\in B$,
because~$m(z)$ does not depend on~$z_B$, whereas its weights are summable in
the coordinates indexed by~$S$, because $\rho_j<1$ for $j\in S$.

Call $x,y\in\Z_+^d$ \emph{neighbours} if $c_s(x,y)>0$; every point has at
most $d(d+1)$ neighbours, and the resulting graph is connected. The jump
chain of~$q_s$ is the random walk on the network $(\Z_+^d,c_s)$ in the sense
of~\cite[Sect.~2.1]{LP16}. The probability in~\eqref{eq_caphit} depends on
the jump chain only, and $m(x_0)q_s(x_0)=\sum_yc_s(x_0,y)$. Hence,
by~\eqref{eq_caphit} applied to~$q_s$ and by~\cite[(2.4)]{LP16},
$\capa_s(x_0,G_n)$ is the effective conductance between~$x_0$ and~$G_n$ in
this network, and $\capa_s(x_0)$ is the effective conductance from~$x_0$ to
infinity, which does not depend on the choice of the sets~$K_n$;
see~\cite[Sect.~2.2 and Exercise~2.4]{LP16}. We shall use the following two
classical criteria~\cite[Sect.~2.5]{LP16}; recall that an edge~$b$ is an
unordered pair of distinct points, and $c_s(b)$ its conductance.

\emph{The Nash-Williams criterion}~\cite[(2.14)]{LP16}: if
$\Pi_1,\Pi_2,\ldots$ are pairwise disjoint finite sets of edges with
positive conductances, each of which separates~$x_0$ from infinity (that
is, every infinite path of neighbours which starts at~$x_0$ and visits every
point at most once uses an edge of each~$\Pi_i$), then
\[
\frac{1}{\capa_s(x_0)}\geq\sum_i\frac{1}{c_s(\Pi_i)},
\qquad\text{where}\quad c_s(\Pi_i):=\sum_{b\in\Pi_i}c_s(b)
\]
(and $1/0:=\8$).

\emph{Lyons' criterion}~\cite[Theorems~2.3 and~2.11]{LP16}:
$\capa_s(x_0)>0$ if and only if there is a \emph{unit flow from~$x_0$ to
infinity of finite energy}, that is, a flow~$\vartheta$ on~$\Z_+^d$ which
vanishes on pairs of non-neighbours, satisfies
$\sum_y\vartheta(x,y)=\1{x=x_0}$ for every~$x$, and has
\[
\|\vartheta\|^2:=\sum_{b=\{x,y\}:\,c_s(b)>0}\frac{\vartheta(x,y)^2}{c_s(b)}<\8 .
\]

\begin{proof}[Proof of Theorem~\ref{t_main}.]
By Theorem~\ref{t_compare}, $Q$ is recurrent if and only if its
symmetrisation is; moreover, $Q$ is not positive recurrent, because
$B\neq\emptyset$ (see the discussion preceding~\eqref{eq_standing}). So it
is enough to show that the symmetrisation of~$Q$ is recurrent if
$k_0\leq 2$, and transient if $k_0\geq 3$. We take $x_0:=0$.

Let $k_0\leq 2$. For $i\geq 0$, let $\Pi_i$ be the set of edges with
positive conductance joining $\{x:|x|=i\}$ to $\{x:|x|=i+1\}$. Since~$|x|$
changes by at most~$1$ between neighbours, by Lemma~\ref{l_base}(a), and
the sets $\{x:|x|=i\}$ are finite, the sets~$\Pi_i$ are finite, pairwise
disjoint, and each of them separates~$0$ from infinity. Every edge
of~$\Pi_i$ has an endpoint~$x$ with $|x|=i$, and
$\sum_yc_s(x,y)=m(x)q_s(x)=m(x)q(x)\leq\Lambda m(x)$ with
$\Lambda:=\sum_ja_j+\sum_j\mu_j$. Hence, splitting a configuration~$x$
with $|x|=i$ into its $B$- and $S$-parts and using that~$m$ does not
depend on~$x_B$,
\begin{align*}
c_s(\Pi_i)&\leq\Lambda\sum_{|x|=i}m(x)
 =\Lambda\sum_{r=0}^{i}\#\big\{x_B\in\Z_+^{k_0}:|x_B|=i-r\big\}
 \sum_{|x_S|=r}\prod_{j\in S}\rho_j^{x_j}\\
&\leq\Lambda\,(i+1)^{k_0-1}\prod_{j\in S}\frac{1}{1-\rho_j},
\end{align*}
because $\#\{x_B\in\Z_+^{k_0}:|x_B|=n\}=\binom{n+k_0-1}{k_0-1}$ equals~$1$
for $k_0=1$ and $n+1\leq i+1$ for $k_0=2$ (here $n=i-r$). For $k_0\leq 2$ the
series $\sum_i(i+1)^{1-k_0}$ diverges, so the Nash-Williams criterion gives
$\capa_s(0)=0$, that is, the symmetrisation of~$Q$ is recurrent.

\medskip

Let now $k_0\geq 3$. We use the classical fact that, for $k\geq 3$, the
lattice~$\Z^k$ with unit conductances carries a unit flow from the origin
to infinity of finite energy; this follows from P\'olya's theorem and
Lyons' criterion, cf.~\cite[Sect.~2.5]{LP16}. We first transfer such a flow
to the orthant.

\begin{lem}
\label{l_orthflow}
For $k\geq 3$, the graph~$\Z_+^k$ with unit conductances carries a unit
flow from the origin to infinity of finite energy.
\end{lem}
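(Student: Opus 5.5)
The plan is to obtain the flow on $\Z_+^k$ by folding a flow on $\Z^k$ through the reflection map $\phi(x):=(|x_1|,\dots,|x_k|)$, which sends $\Z^k$ onto $\Z_+^k$.

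Start from a unit flow $\theta$ on $\Z^k$ (unit conductances) from the origin to infinity with $\|\theta\|^2<\8$, as provided by Pólya's theorem and Lyons' criterion. Note that $\phi$ maps nearest-neighbour pairs $\{x,x\pm e_i\}$ either to nearest-neighbour pairs of $\Z_+^k$, or, when $x_i=0$ and the step goes to $\pm1$, to a pair of neighbours as well ($0$ and $e_i$ in that coordinate). The only degenerate case would be a step with $\phi(x)=\phi(y)$; this cannot happen, because a unit step changes exactly one coordinate by $1$ and therefore changes its absolute value by exactly $1$. Hence define the pushforward
\[
\vartheta(u,v):=\sum_{x\in\phi^{-1}(u),\,y\in\phi^{-1}(v),\,|x-y|_1=1}\theta(x,y),\qquad u,v\in\Z_+^k .
\]
This expression is antisymmetric and vanishes unless $u,v$ are neighbours in $\Z_+^k$.

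Next, compute the divergence. For $u\in\Z_+^k$ we have
\[
\sum_v\vartheta(u,v)=\sum_{x\in\phi^{-1}(u)}\sum_y\theta(x,y)=\sum_{x\in\phi^{-1}(u)}\1{x=0}=\1{u=0},
\]
because every neighbour $y$ of $x$ is counted under exactly one $v=\phi(y)\neq u$. Absolute summability is clear, since each fibre $\phi^{-1}(u)$ has at most $2^k$ points and each point has degree $2k$. So $\vartheta$ is a unit flow from $0$ to infinity in $\Z_+^k$.

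Finally, bound the energy. Each edge $\{u,v\}$ of $\Z_+^k$ has at most $2^k$ preimage edges (a preimage edge is determined by its endpoint in $\phi^{-1}(u)$ together with the coordinate that changes). By Cauchy--Schwarz,
\[
\vartheta(u,v)^2\leq 2^k\sum\theta(x,y)^2 ,
\]
the sum running over those preimage edges. Each edge of $\Z^k$ is a preimage of exactly one edge of $\Z_+^k$, so $\|\vartheta\|^2\leq 2^k\|\theta\|^2<\8$.

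The only step needing care is checking that no edge of $\Z^k$ collapses under $\phi$, so that the flow conservation computation is legitimate. As noted above, this holds because unit steps change $|x_i|$ by exactly one. If one prefers to avoid Pólya entirely, an alternative is to write down the explicit radial flow on $\Z_+^k$ with $\vartheta$ of order $n^{-(k-1)}$ per edge at level $n$, giving energy $\sum_n n^{k-1}n^{-2(k-1)}<\8$ for $k\geq3$. The folding argument is shorter, though.
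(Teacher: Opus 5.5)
Your argument is correct and is essentially the paper's proof: you fold a finite-energy unit flow on $\Z^k$ through $x\mapsto(|x_1|,\dots,|x_k|)$, use that no edge collapses and that $\phi^{-1}(0)=\{0\}$ to get the divergence, and bound the energy by $2^k$ times the original energy via Cauchy--Schwarz. One small correction to your parenthetical justification of the $2^k$ count: a preimage edge is determined by the changing coordinate $i$ together with its endpoint in the fibre of whichever of $u,v$ has the \emph{larger} $i$-th coordinate, not the fibre of $u$ in general, since in the other fibre a point $x$ with $x_i=0$ carries the two preimage edges $\{x,x\pm e_i\}$; the bound $2^k$, and hence your conclusion, is unaffected.
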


\begin{proof}
Let $\psi:\Z^k\to\Z_+^k$ be the folding map $\psi(x)_i:=|x_i|$. If
$x$ and $x+e_i$ are neighbours in~$\Z^k$, then $\psi(x)$ and $\psi(x+e_i)$
differ exactly by~$\pm e_i$, because $|x_i|$ and $|x_i+1|$ are the
absolute values of two consecutive integers and therefore differ by~$1$.
Thus $\psi$ maps edges to edges, and the image of a unit flow
$\vartheta_\Z$ from the origin to infinity,
\[
\vartheta(y,y'):=\sum_{\psi(x)=y,\ \psi(x')=y'}\vartheta_\Z(x,x'),
\]
satisfies, for every $y\in\Z_+^k$,
$\sum_{y'}\vartheta(y,y')=\sum_{x\in\psi^{-1}(y)}
\sum_{x'}\vartheta_\Z(x,x')=\1{y=0}$, since $\psi^{-1}(0)=\{0\}$. So
$\vartheta$ is a unit flow from the origin to infinity on~$\Z_+^k$.
Finally each edge of~$\Z_+^k$ has at most~$2^k$ preimages, so that
$\vartheta(y,y')^2\leq 2^k\sum\vartheta_\Z(x,x')^2$ by the
Cauchy--Schwarz inequality, and the energy of~$\vartheta$ is at
most~$2^k$ times that of~$\vartheta_\Z$.
\end{proof}

Index the coordinates of~$\Z_+^{k_0}$ by~$B$, and identify $\Z_+^{k_0}$ with
$\{x\in\Z_+^d:x_S=0\}$ by $y\mapsto\bar y$, where $\bar y_B=y$ and
$\bar y_S=0$; note that $m(\bar y)=1$. We transfer the flow~$\vartheta$ of
Lemma~\ref{l_orthflow} (with $k=k_0$) to the symmetrised network by
replacing each edge of~$\Z_+^{k_0}$ by a path.

Fix $i\in B$ and $y\in\Z_+^{k_0}$. Since $i$ can be filled, the routing
graph contains a directed path $*,n_0,n_1,\dots,n_r=i$ with
$n_0,\dots,n_r$ pairwise distinct, so that $r\leq d-1$. Consider the path
\begin{equation}
\label{eq_path}
\bar y\ \to\ \bar y+e_{n_0}\ \to\ \bar y+e_{n_1}\ \to\ \cdots\ \to\
 \bar y+e_{n_r}=\bar y+e_i ,
\end{equation}
whose successive states are neighbours in the symmetrised network: an
external arrival at~$n_0$, and then service completions at
$n_0,\dots,n_{r-1}$, each legitimate because the customer just placed is
there. All the edges of~\eqref{eq_path} have base~$\bar y$, by
Lemma~\ref{l_base}(a), and correspond to arcs of the routing graph; they
therefore carry conductance at least~$C_\flat m(\bar y)=C_\flat$. Moreover,
the path visits pairwise distinct states.

Let $\vartheta_H$ be the flow obtained from~$\vartheta$ by sending, for
every $y$ and every $i\in B$, the amount $\vartheta(y,y+e_i)$ along the
path~\eqref{eq_path}. Each path contributes a net divergence at its two
endpoints only, so $\vartheta_H$ is a unit flow from~$\bar 0=0$ to
infinity in the symmetrised network. As for its energy, every edge
with positive conductance has a unique base~$z$, by
Lemma~\ref{l_base}(a), and it can occur in the path~\eqref{eq_path} only
for $y=z_B$ and only if $z_S=0$; so it is used by at most~$k_0$ of the
paths. Since moreover at most $(d+1)^2$ edges share a given base,
\[
\|\vartheta_H\|^2\leq\frac{1}{C_\flat}\sum_b\vartheta_H(b)^2
\leq\frac{(d+1)^2k_0}{C_\flat}\sum_{y\in\Z_+^{k_0}}\sum_{i\in B}
 \vartheta(y,y+e_i)^2<\8
\]
by the Cauchy--Schwarz inequality and Lemma~\ref{l_orthflow}, where
$\vartheta_H(b)^2:=\vartheta_H(x,x')^2$ for $b=\{x,x'\}$. By Lyons'
criterion, $\capa_s(0)>0$, that is, the symmetrisation of~$Q$ is transient.

\medskip

It remains to prove the last assertion of Theorem~\ref{t_main}; let
$k_0\geq 3$ and $x\in\Z_+^d$. Let~$Q^*$ be the time reversal of~$Q$ with
respect to~$m$, that is, the chain with rates
$q^*(y,y'):=m(y')q(y',y)/m(y)$. By~\eqref{eq_inv}, $q^*(y)=q(y)$ for
every~$y$; hence~$Q^*$ is irreducible, has bounded jump rates and
admits~$m$ as an invariant measure, so that it fits the setting of
Section~\ref{s_cap}. Its conductances are $c^*(y,y')=c(y',y)$, so that its
$m$-symmetrisation is again~$q_s$. Since~$q_s$ is transient, $\capa_s(x)>0$
by~\eqref{eq_caphit}, and~\eqref{eq_capineq} applied to~$Q^*$ gives
$\capa^*(x)\geq\capa_s(x)$, where $\capa^*$ is the capacity for~$Q^*$.

Let $G(y,y'):=\IE_y\int_0^\8\1{Q(t)=y'}\,dt$, and let~$G^*$ be the
analogous quantity for~$Q^*$. Since
$m(y)\IP_y[Q^*(t)=y']=m(y')\IP_{y'}[Q(t)=y]$ for all $t\geq 0$, we have
$m(x)G(x,y)=m(y)G^*(y,x)$. By the strong Markov property at the hitting
time of~$x$, and by~\eqref{eq_caphit} for~$Q^*$ (letting $n\to\8$ there),
\[
G^*(y,x)\leq G^*(x,x)
 =\frac{1}{q(x)\,\IP^*_x\big[Q^*\text{ never returns to }x\big]}
 =\frac{m(x)}{\capa^*(x)}\leq\frac{m(x)}{\capa_s(x)} .
\]
Therefore, with $A_n:=\{y\in\Z_+^d:|y_B|\leq n\}$,
\[
\IE_x\int_0^\8\1{|Q_B(t)|\leq n}\,dt=\sum_{y\in A_n}G(x,y)
 =\sum_{y\in A_n}\frac{m(y)}{m(x)}\,G^*(y,x)\leq\frac{m(A_n)}{\capa_s(x)},
\]
and, as~$m$ does not depend on~$y_B$,
\[
m(A_n)=\#\big\{y_B\in\Z_+^{k_0}:|y_B|\leq n\big\}
 \prod_{j\in S}\frac{1}{1-\rho_j}
 \leq(n+1)^{k_0}\prod_{j\in S}\frac{1}{1-\rho_j},
\]
because $\binom{n+k_0}{k_0}=\prod_{u=1}^{k_0}\frac{n+u}{u}\leq(n+1)^{k_0}$.
To make this bound uniform in~$x$, note that the network started
at~$0$ is componentwise dominated by the network started at~$x$,
so that the former spends at
least as much time in~$A_n$ as the latter. Therefore, by the above bound
with $x=0$,
\[
\IE_x\int_0^\8\1{|Q_B(t)|\leq n}\,dt\leq
\IE_0\int_0^\8\1{|Q_B(t)|\leq n}\,dt\leq\frac{m(A_n)}{\capa_s(0)}
\leq C\,(n+1)^{k_0},
\]
with $C:=\capa_s(0)^{-1}\prod_{j\in S}(1-\rho_j)^{-1}$. Finally, conditionally
on the jump chain of~$Q$, the holding times of~$Q$ at the successive states
of~$A_n$ that it visits are independent exponential random variables with
rates at most~$\Lambda$; if the jump chain visited~$A_n$ infinitely often,
the total time spent by~$Q$ in~$A_n$ would therefore be infinite a.s., which
is excluded by the above bound. Hence $|Q_B(t)|>n$ for all~$t$ large
enough, a.s.; since~$n$ is arbitrary, $|Q_B(t)|\to\8$ a.s.

This concludes the proof of Theorem~\ref{t_main}.
\end{proof}

\section{Lyapunov functions and the balanced queues}
\label{s_lyap}

Theorem~\ref{t_main} settles the question of recurrence and transience
of~$Q$, but its proof is not constructive: it compares capacities, it does
not produce any explicit function of the state which would be a super- or
submartingale for~$Q$, and it says little about the individual balanced
queues. The Lyapunov-function method~\cite{FMM95,MPW17} is based on such
explicit functions, which usually yield quantitative information on the
trajectories as well; for instance, an explicit Lyapunov function for
transience would give bounds on the probability of ever returning, from far
away, to a neighbourhood of the origin, which decay polynomially in the
initial distance. In this section we explain how this method applies in
the situation~\eqref{eq_standing}. Its main ingredient is a
\emph{corrector}: a function of the stable queues which, added to the
``naive'' function of the balanced queues, removes the fluctuating drift
produced by the stable part of the network. As a consequence, we show that
\emph{every} balanced queue becomes empty at arbitrarily large times, for
every~$k_0$ (so also in the transient case), and we bound the tail of the
time it needs to empty (Proposition~\ref{p_tail} and
Corollary~\ref{c_turns}). The simultaneous emptying of all balanced queues
is a genuinely $k_0$-dimensional question, which we only discuss briefly
(Remarks~\ref{r_k2} and~\ref{r_k3}). In this section~$\LL$ is the generator
of~$Q$, applied componentwise to vector- and matrix-valued functions,
and~$^\top$ denotes transposition.

\medskip

Consider the \emph{routing chain}, that is, the Markov chain on~$\Vs$ which
jumps from $j\in V$ to $\ell\in\Vs$ with probability~$p_{j\ell}$ and is
absorbed at~$*$; since $P^n\to 0$, it is absorbed a.s. For $n\in\Vs$ and
$i\in B$ let $\Gamma_{ni}$ be the probability that the routing chain started
at~$n$ visits~$B$ (at some time $t\geq 0$) and that its first visit to~$B$
is at~$i$; thus $\Gamma_{ni}=\1{n=i}$ for $n\in B$, and $\Gamma_{*i}=0$.
Write $\Gamma_n:=(\Gamma_{ni})_{i\in B}\in[0,1]^B$, and, for $i,j\in B$,
let~$\tilde p_{ij}$ be the probability that the routing chain started at~$i$
returns to~$B$ and that its first return is to~$j$; the matrix
$\tilde P:=(\tilde p_{ij})_{i,j\in B}$ is the transition matrix of the
routing chain watched on~$B$, and $\tilde p_{i\cdot}:=(\tilde p_{ij})_{j\in B}$
denotes its $i$-th row. By the Markov property at the first step,
\begin{equation}
\label{eq_harm}
\sum_{\ell\in\Vs}p_{n\ell}\,\Gamma_\ell=\Gamma_n\quad(n\in S),
\qquad
\sum_{\ell\in\Vs}p_{i\ell}\,\Gamma_\ell=\tilde p_{i\cdot}\quad(i\in B).
\end{equation}
Moreover, $\tilde p_{ii}<1$ for $i\in B$, since otherwise the routing chain
started at~$i$ would return to~$i$ infinitely often a.s. Let us define
the vector-valued function $Y:\Z_+^d\to\R_+^B$ by
\[
Y(x):=\sum_{n\in V}x_n\Gamma_n=x_B+u(x_S),\qquad
u(x_S):=\sum_{n\in S}x_n\Gamma_n,\qquad x\in\Z_+^d ,
\]
where $x_B=(x_i)_{i\in B}\in\Z_+^B$; that is,
\[
Y_i(x)=x_i+\sum_{n\in S}x_n\Gamma_{ni},\qquad i\in B .
\]
Thus $Y_i(x)$ counts the customers at~$i$, plus the customers at the
stable nodes, each weighted by the probability that it will enter~$B$ at~$i$
next. The function~$Y$ has linear growth, and each coordinate of each of its
jumps lies in $[-1,1]$. Finally, recalling that $\Gamma_i=(\1{j=i})_{j\in B}$
for $i\in B$, let
\[
r_i:=\mu_i\big(\Gamma_i-\tilde p_{i\cdot}\big)\in\R^B,\qquad i\in B .
\]

\begin{lem}
\label{l_drift}
For every $x\in\Z_+^d$,
\[
\LL Y(x)=\sum_{i\in B}\1{x_i=0}\,r_i .
\]
In particular, $\LL Y(x)=0$ whenever $x_i\geq 1$ for all $i\in B$.
\end{lem}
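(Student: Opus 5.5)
Since $Y$ is linear, $Y(x)=\sum_{n\in V}x_n\Gamma_n$, I will compute $\LL Y(x)$ transition by transition, using the list of rates of $Q$ from Section~\ref{s_sector} together with the convention $e_*=0$ and $\Gamma_*=0$. Every transition changes $x$ by $e_\ell$ (external arrival) or by $e_\ell-e_j$ (service at~$j$ routed to~$\ell\in\Vs$). Under $Y$ these become $\Gamma_\ell$ and $\Gamma_\ell-\Gamma_j$ respectively. Hence
\[
\LL Y(x)=\sum_{\ell\in V}a_\ell\Gamma_\ell+\sum_{j\in V}\mu_j\1{x_j\geq 1}\sum_{\ell\in\Vs}p_{j\ell}\big(\Gamma_\ell-\Gamma_j\big).
\]
Self-routing $\ell=j$ contributes zero, so it does no harm to include it. Using $\sum_{\ell\in\Vs}p_{j\ell}=1$ and~\eqref{eq_harm}, the inner sum equals $\sum_\ell p_{j\ell}\Gamma_\ell-\Gamma_j$. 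This vanishes for $j\in S$ and equals $\tilde p_{j\cdot}-\Gamma_j=-r_j/\mu_j$ for $j\in B$.

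The terms from stable nodes therefore disappear identically, whatever the indicator $\1{x_j\geq1}$ is. This is the essential point: the corrector $u(x_S)$ is harmonic for the routing chain off~$B$, so the emptiness of stable queues does not matter. We are left with
\[
\LL Y(x)=\sum_{\ell\in V}a_\ell\Gamma_\ell-\sum_{i\in B}\1{x_i\geq1}\,r_i
=\Big(\sum_{\ell\in V}a_\ell\Gamma_\ell-\sum_{i\in B}r_i\Big)+\sum_{i\in B}\1{x_i=0}\,r_i.
\]
It remains to show that the constant vector $D:=\sum_\ell a_\ell\Gamma_\ell-\sum_{i\in B}\mu_i(\Gamma_i-\tilde p_{i\cdot})$ is zero. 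This is the only real step.

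To prove $D=0$, I will use the traffic equations with $\lambda_i=\mu_i$ for $i\in B$, which holds under~\eqref{eq_standing}. Since $\lambda=a+\lambda P$ and $\Gamma$ satisfies $\Gamma_n=\sum_\ell p_{n\ell}\Gamma_\ell$ for $n\in S$, consider $\sum_{n\in V}\lambda_n\big(\sum_\ell p_{n\ell}\Gamma_\ell-\Gamma_n\big)$. By~\eqref{eq_harm} this equals $\sum_{i\in B}\lambda_i(\tilde p_{i\cdot}-\Gamma_i)=-\sum_{i\in B}r_i$. On the other hand, rearranging the double sum and using $\Gamma_*=0$ gives $\sum_{\ell\in V}(\lambda P)_\ell\Gamma_\ell-\sum_n\lambda_n\Gamma_n=-\sum_\ell a_\ell\Gamma_\ell$. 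Comparing the two expressions yields $D=0$. Probabilistically, this says that the inflow into~$B$ at each $i$ balances: $\mu_i$ equals the external rate of customers whose first visit to~$B$ is at~$i$, plus $\sum_j\mu_j\tilde p_{ji}$.

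The final assertion of the lemma follows at once: if $x_i\geq1$ for all $i\in B$, every indicator vanishes. The main obstacle is only keeping track of the bookkeeping with $*$ and self-loops. Because $\Gamma_*=0$ and the $\ell=j$ terms vanish, I expect no difficulty there.
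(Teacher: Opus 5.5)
Your proof is correct and follows essentially the same route as the paper. Both compute $\LL Y$ transition by transition and kill the stable-node terms via \eqref{eq_harm}. Both then obtain $\sum_\ell a_\ell\Gamma_\ell=\sum_{i\in B}r_i$ by pairing the traffic equations \eqref{eq_traffic} with $\Gamma$ and using $\lambda_i=\mu_i$ on $B$; your two-way evaluation of $\sum_n\lambda_n\bigl(\sum_\ell p_{n\ell}\Gamma_\ell-\Gamma_n\bigr)$ is just a rearrangement of that step.
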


\begin{proof}
An external arrival at~$\ell$ moves~$Y$ by~$\Gamma_\ell$, and a service
completion at~$j$ routed to $\ell\in\Vs$ moves it by $\Gamma_\ell-\Gamma_j$.
Hence, by~\eqref{eq_harm},
\[
\LL Y(x)=\sum_{\ell\in V}a_\ell\Gamma_\ell
 +\sum_{j\in V}\mu_j\1{x_j\geq 1}\Big(\sum_{\ell\in\Vs}p_{j\ell}\Gamma_\ell-\Gamma_j\Big)
 =\sum_{\ell\in V}a_\ell\Gamma_\ell-\sum_{i\in B}\1{x_i\geq 1}\,r_i ,
\]
the terms with $j\in S$ being equal to zero. It remains to show that
$\sum_\ell a_\ell\Gamma_\ell=\sum_{i\in B}r_i$. Multiply~\eqref{eq_traffic}
by~$\Gamma_\ell$ and sum over~$\ell$; since $\Gamma_*=0$, \eqref{eq_harm}
gives
\[
\sum_{n\in S}\lambda_n\Gamma_n+\sum_{i\in B}\lambda_i\Gamma_i
=\sum_{\ell\in V}a_\ell\Gamma_\ell+\sum_{n\in S}\lambda_n\Gamma_n
 +\sum_{i\in B}\lambda_i\tilde p_{i\cdot} ,
\]
and we conclude by recalling that $\lambda_i=\mu_i$ for $i\in B$.
\end{proof}

Lemma~\ref{l_drift} explains why~$Y$ can be thought of as a ``naive
martingale plus a corrector''. If there were no stable nodes, then $Y(x)=x$
and, for $k_0=1$, $Q$ would be an $\mathrm{M/M/1}$ queue whose arrival rate
equals its service rate, so that $Q(t)$ is a martingale up to the time it
hits~$0$. In the presence of stable nodes, the coordinates~$x_B$ are no
longer martingales: using $\lambda_i=\mu_i=a_i+\sum_n\lambda_np_{ni}$ for
$i\in B$, we obtain
\[
\LL x_i=\sum_{n\in S}p_{ni}\big(\mu_n\1{x_n\geq 1}-\lambda_n\big)
\qquad\text{whenever }x_j\geq 1\text{ for all }j\in B ,
\]
a drift which is of order~$1$, takes both signs, and vanishes only on
average, with respect to the product of the geometric laws
$\prod_{n\in S}(1-\rho_n)\rho_n^{x_n}$. By Lemma~\ref{l_drift}, the
corrector $u(x_S)$ solves the corresponding Poisson equation
$\LL u=-\LL x_B$ on $\{x_j\geq 1\ \forall j\in B\}$, which is the classical
device of homogenisation; what is special about Jackson networks is that
the corrector is explicit and \emph{linear}. That it is unbounded does no
harm, since its increments are bounded.

\medskip

Let us now fix $i\in B$ and put
\[
\tau_i:=\inf\{t\geq 0:Q_i(t)=0\} .
\]
Taking the $i$-th coordinate in Lemma~\ref{l_drift}, we obtain
\begin{equation}
\label{eq_coorddrift}
\LL Y_i(x)=\1{x_i=0}\,\mu_i(1-\tilde p_{ii})
 -\sum_{j\in B\setminus\{i\}}\1{x_j=0}\,\mu_j\tilde p_{ji} ;
\end{equation}
in particular, $\LL Y_i\leq 0$ on $\{x_i\geq 1\}$, with equality if
$k_0=1$. Thus, up to time~$\tau_i$, the process $Y_i(Q)$ has nonpositive
drift and bounded jumps, and we shall see that its jump variance is bounded
away from zero. The following tail bound is then a consequence
of~\cite[Theorems~2.4.5 and~2.4.8]{MPW17}, applied to the jump chain
of~$Q$.

\begin{prop}
\label{p_tail}
There is $C<\8$ such that
\[
\IP_x[\tau_i>t]\leq C\,Y_i(x)\,t^{-1/2}
\qquad\text{for all }i\in B,\ x\in\Z_+^d\text{ and }t\geq 1 .
\]
In particular, $\IE_x\tau_i^\gamma<\8$ for every $\gamma<\frac12$, and
$\liminf_{t\to\8}Q_i(t)=0$, $\IP_x$-a.s.
\end{prop}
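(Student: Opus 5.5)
We would work with the jump chain $(Z_k)_{k\geq0}$ of~$Q$, since \cite[Theorems~2.4.5 and~2.4.8]{MPW17} are stated for discrete-time processes. The statement is then transferred back to continuous time through the holding times, which are i.i.d.-like exponentials with rates bounded above by~$\Lambda$ and below by a positive constant. The lower bound holds because every state has a positive external arrival rate somewhere: $\sum_\ell a_\ell>0$, since some node can be filled. Set $X_k:=Y_i(Z_k)$ and let $\sigma_i$ be the first $k$ with $(Z_k)_i=0$. The plan has three steps.

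First, the drift. By~\eqref{eq_coorddrift}, on $\{x_i\geq1\}$ we have $\LL Y_i(x)\leq0$, and the drift of the jump chain is $\LL Y_i(x)/q(x)\leq 0$. Hence $X_{k\wedge\sigma_i}$ is a supermartingale with increments in $[-1,1]$.

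Second, a uniform lower bound on the conditional variance of the increments, $\IE[(X_{k+1}-X_k)^2\mid Z_k=x]\geq v>0$ for all $x$ with $x_i\geq1$. This is the step I expect to be the main obstacle. The difficulty is that many transitions (moves among stable nodes, or to nodes~$n$ with $\Gamma_{ni}=0$) leave $Y_i$ unchanged, and the available transitions depend on which queues are empty. I would use the service completion at~$i$ itself, available since $x_i\geq1$. It occurs with rate $\mu_i\geq\mu_i/\Lambda\cdot q(x)$ and moves $Y_i$ by $\sum_\ell p_{i\ell}\Gamma_{\ell i}-1$ in expectation over $\ell$. The jumps with $\ell$ fixed are $\Gamma_{\ell i}-1$, and their average is $\tilde p_{ii}-1<0$ by~\eqref{eq_harm}. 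So at least one $\ell$ with $p_{i\ell}>0$ gives a jump $\Gamma_{\ell i}-1\leq \tilde p_{ii}-1<0$. This has probability at least $\mu_ip_{i\ell}/\Lambda$, giving $v\geq (\mu_ip_{i\ell}/\Lambda)(1-\tilde p_{ii})^2$. Minimising over the finitely many $i\in B$ gives a uniform~$v$.

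Third, the tail bound. A nonnegative supermartingale with bounded jumps and conditional variance bounded below is exactly the setting of \cite[Theorem~2.4.5/2.4.8]{MPW17}. Those results give $\IP[\sigma_i'>k]\leq C X_0 k^{-1/2}$, where $\sigma_i'$ is the exit time from $\{X> 0\}$ or from $[1,\infty)$. We need hitting $\{x_i=0\}$, and $Y_i(x)\geq x_i$, but $Y_i$ can be positive while $x_i=0$. Since $Y_i\geq0$ always, I would apply the theorem to the exit of $X$ from $\{x_i\geq1\}$ directly. The martingale argument (optional stopping for $X^2$ and $X$, as in the proof of the $k^{-1/2}$ bound) only needs the supermartingale property and the variance bound up to~$\sigma_i$, together with nonnegativity. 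It therefore yields $\IP_x[\sigma_i>k]\leq C(Y_i(x)+1)k^{-1/2}$; the $+1$ comes from the overshoot. We may absorb it into~$C Y_i(x)$ since $Y_i(x)\geq x_i\geq1$ whenever $\tau_i>0$. For continuous time, $\tau_i>t$ forces either $\sigma_i>\epsilon t$ or the sum of $\epsilon t$ holding times exceeding $t$. The latter has exponentially small probability by a Chernoff bound for $\epsilon<1/\Lambda$, and this probability is $\leq Ct^{-1/2}$.

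Finally, the consequences. $\IE_x\tau_i^\gamma=\int\gamma t^{\gamma-1}\IP_x[\tau_i>t]\,dt<\infty$ for $\gamma<1/2$. For $\liminf Q_i=0$: $\tau_i<\infty$ a.s. from every state. Applying the strong Markov property at successive times, for instance after each time~$Q_i$ leaves~$0$ (which happens a.s., since $i$ can be filled), gives infinitely many returns to $\{Q_i=0\}$ at arbitrarily large times.
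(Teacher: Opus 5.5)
Your proposal is correct and follows essentially the paper's proof. Both use the jump chain, the nonnegative supermartingale $Y_i$ stopped when node~$i$ first empties, and a variance lower bound from service completions at~$i$ (you pick one good routing target~$\ell$ where the paper applies Jensen's inequality). Both then combine the maximal inequality with a quadratic function below level $s=n^{1/2}$ via \cite[Theorems~2.4.5 and~2.4.8]{MPW17}, and finish with a large-deviation transfer to continuous time. One slip: in that transfer the Chernoff bound needs $\epsilon<a_*:=\sum_\ell a_\ell$, the lower bound on the total jump rates, not $\epsilon<1/\Lambda$.
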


\begin{proof}
Put $a_*:=\sum_\ell a_\ell$, so that $0<a_*\leq q(y)\leq\Lambda$ for
every~$y$, with $\Lambda=\sum_ja_j+\sum_j\mu_j$ as in
Section~\ref{s_sym}. Let $(X_n)_{n\geq 0}$ be the jump chain of~$Q$,
$\mathcal F_n:=\sigma(X_0,\dots,X_n)$,
$\Delta_n:=Y_i(X_{n+1})-Y_i(X_n)$, and
$\hat\tau:=\min\{n\geq 0:(X_n)_i=0\}$. We may assume that $x_i\geq 1$, since
otherwise $\tau_i=0$; then $Y_i(x)\geq 1$. We have $|\Delta_n|\leq 1$, and,
on $\{n<\hat\tau\}$, $\IE[\Delta_n\mid\mathcal F_n]=\LL Y_i(X_n)/q(X_n)\leq 0$
by~\eqref{eq_coorddrift}, whereas, counting only the service completions at
node~$i$ and using~\eqref{eq_harm} and Jensen's inequality,
\[
\IE[\Delta_n^2\mid\mathcal F_n]
\geq\frac{\mu_i}{\Lambda}\sum_{\ell\in\Vs}p_{i\ell}(1-\Gamma_{\ell i})^2
\geq\frac{\mu_i(1-\tilde p_{ii})^2}{\Lambda}
\geq v:=\frac1\Lambda\min_{j\in B}\mu_j(1-\tilde p_{jj})^2>0 .
\]
Fix $s\geq 1$, let $\sigma_s:=\min\{n\geq 0:Y_i(X_n)\geq s\}$ and
$\zeta:=\hat\tau\wedge\sigma_s$. Since $Y_i(X_{n\wedge\hat\tau})$ is a
nonnegative supermartingale, \cite[Theorem~2.4.5]{MPW17} gives
$\IP_x[\sigma_s<\hat\tau]\leq Y_i(x)/s$. Next, $f(y):=2(s+1)y-y^2$ satisfies
$f(y+\delta)-f(y)=2(s+1-y)\delta-\delta^2$, so that
$\IE\big[f(Y_i(X_{n+1}))-f(Y_i(X_n))\mid\mathcal F_n\big]\leq -v$ on
$\{n<\zeta\}$. Since $0\leq Y_i(X_{n\wedge\zeta})\leq s+1$, and~$f$ is
nonnegative on $[0,s+1]$, \cite[Theorem~2.4.8]{MPW17} gives
$v\,\IE_x(n\wedge\zeta)\leq f(Y_i(x))\leq 2(s+1)Y_i(x)$, so that
$v\,\IE_x\zeta\leq 2(s+1)Y_i(x)$. As
$\{\hat\tau>n\}\subset\{\sigma_s<\hat\tau\}\cup\{\zeta>n\}$, Markov's
inequality and the choice $s=n^{1/2}$ give
\[
\IP_x[\hat\tau>n]\leq\big(1+4v^{-1}\big)\,Y_i(x)\,n^{-1/2},\qquad n\geq 1 .
\]
Since the holding rates of~$Q$ are bounded away from zero (by~$a_*$) and
$Y_i(x)\geq 1$, the same bound for~$\tau_i$, with a larger~$C$, follows by an
elementary large-deviation estimate for sums of exponential random
variables. The moment bound follows by integrating the tail bound, and the
last claim by the Markov property at time $n\in\N$.
\end{proof}

For $k_0=1$ and $S=\emptyset$, $\tau_1$ is the busy period of a critical
$\mathrm{M/M/1}$ queue, so the exponent~$\frac12$ cannot be improved in
general. Note also that, for $k_0=1$, the face $\{x_1=0\}$ is the whole
set where the balanced queue is empty, and~$m$ has finite mass on it;
combined with Proposition~\ref{p_tail}, this gives an alternative proof of
the recurrence of~$Q$ for $k_0=1$, which we do not pursue.

For $k_0\leq 2$, the last assertion of Proposition~\ref{p_tail} follows
from Theorem~\ref{t_main}, since then~$Q$ is recurrent. For $k_0\geq 3$,
however, it is new, and together with Theorem~\ref{t_main} it gives the
following picture.

\begin{cor}
\label{c_turns}
Let $k_0\geq 3$. Then, $\IP_x$-a.s.\ for every $x\in\Z_+^d$,
\[
\lim_{t\to\8}|Q_B(t)|=\8,\qquad\text{whereas}\quad
\liminf_{t\to\8}Q_i(t)=0\quad\text{for every }i\in B .
\]
\end{cor}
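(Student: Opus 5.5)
The corollary combines two results already established, so the plan is to take one almost sure event from each and intersect them. Fix $x\in\Z_+^d$ and assume $k_0\geq 3$.

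For the first assertion I would invoke Theorem~\ref{t_main} directly: in the transient case it states that $|Q_B(t)|\to\8$ $\IP_x$-a.s. That argument bounds the expected occupation time of $A_n=\{|y_B|\leq n\}$ by $C(n+1)^{k_0}$, uniformly in the starting point. Since the holding times have rates at most~$\Lambda$, the jump chain can visit $A_n$ only finitely often, and this holds for every~$n$.

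For the second assertion I would fix $i\in B$ and apply Proposition~\ref{p_tail}. It gives $\IP_y[\tau_i>t]\leq C\,Y_i(y)t^{-1/2}$, so $\tau_i<\8$ $\IP_y$-a.s. for every $y$. The step needing care is upgrading ``hits zero once'' to ``hits zero at arbitrarily large times'', and I would do this with the Markov property at integer times. For each $n\in\N$,
\[
\IP_x\big[\exists\,t\geq n:Q_i(t)=0\big]=\IE_x\Big[\IP_{Q(n)}[\tau_i<\8]\Big]=1 .
\]
Intersecting these events over $n\in\N$ gives $\liminf_{t\to\8}Q_i(t)=0$ $\IP_x$-a.s. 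This is exactly the last claim of Proposition~\ref{p_tail}, and the proposition requires no assumption on~$k_0$.

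Finally, I would intersect the almost sure event from Theorem~\ref{t_main} with the finitely many events obtained for $i\in B$; the result is still an almost sure event. There is no real obstacle. The only point worth stressing is that the two statements do not contradict each other. Different balanced queues empty at different times, while the sum $|Q_B(t)|$ diverges because the coordinates that are not empty are large. Proposition~\ref{p_tail} controls each coordinate separately through the supermartingale $Y_i(Q)$, and it says nothing about all balanced queues emptying at the same time.
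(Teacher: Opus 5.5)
Your proposal is correct and follows the paper's approach. You intersect the almost sure divergence $|Q_B(t)|\to\8$ from Theorem~\ref{t_main} with the last claim of Proposition~\ref{p_tail}, which holds for every $k_0$ and which the paper likewise derives from the tail bound via the Markov property at integer times $n\in\N$.
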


That is, the total number of customers at the balanced nodes tends to
infinity, but each of the balanced queues is empty 
at arbitrarily large times.

\begin{rem}
\label{r_k2}
The simultaneous emptying of all balanced queues corresponds to the corner
$\{x_B=0\}$, which, by Theorem~\ref{t_main}, is visited at arbitrarily large
times if $k_0\leq 2$, and only finitely many times if $k_0\geq 3$. For
$k_0=2$, a direct proof of recurrence by the Lyapunov-function method would
require a function with finite sublevel sets whose drift is nonpositive
outside a finite set, cf.~\cite[Theorem~2.5.2]{MPW17}. It seems that such a
function can indeed be constructed, but the construction we have is lengthy
and adds little to Theorem~\ref{t_main}; let us only indicate its
structure. Let~$R$ be the $B\times B$ matrix with rows~$r_i$, $i\in B$, and
let
$\mathcal S(x):=\sum_yq(x,y)\big(Y(y)-Y(x)\big)^\top\big(Y(y)-Y(x)\big)$
be the covariance matrix of~$Y$. A computation similar to the proof of
Lemma~\ref{l_drift} shows that, on $\{x_j\geq 1\ \forall j\in B\}$,
$\mathcal S(x)$ equals $\bar\Sigma:=R+R^\top$ plus a term which is linear in
$\big(\mu_n\1{x_n\geq 1}-\lambda_n\big)_{n\in S}$ (and hence vanishes on
average), and that~$\bar\Sigma$ is positive definite; this holds for
every~$k_0$. The identity $\bar\Sigma=R+R^\top$ is the skew symmetry
condition of Harrison and Williams~\cite{HW87}, under which the Lebesgue
measure is invariant for the reflected Brownian motion in the orthant with
covariance matrix~$\bar\Sigma$ and reflection directions~$r_i$ (compare
with the fact that~$m$ does not depend on~$x_B$). For $k_0=2$, it places~$Q$
exactly on the recurrence/transience borderline for zero-drift reflected
random walks in the quarter plane~\cite{AFM95,FMM92} (the
parameter~$\alpha$ of~\cite{Wil85} vanishes), so that the Lyapunov function
has to be of ``sublogarithmic'' type. Its leading part is
$\log\big(C_1+h(Y(x))\big)$, where~$h$ is the logarithm of the
$\bar\Sigma^{-1}$-norm plus a suitable multiple of the corresponding polar
angle, so that the derivatives of~$h$ along~$r_i$ vanish on the faces
$\{y_i=0\}$; using~$Y(x)$ rather than~$x_B$ as the argument accounts for the
corrector~$u$. The concavity of $\log(C_1+\cdot)$ produces, in the
interior, a negative drift of order only $|Y|^{-2}(\log|Y|)^{-2}$, whereas the
fluctuations of~$\mathcal S(x)$ around~$\bar\Sigma$ produce a term of the
larger order $|Y|^{-2}(\log|Y|)^{-1}$, which has no definite sign. This term cannot be
dominated, and has to be cancelled by a \emph{second} corrector, linear
in~$x_S$ and solving a Poisson equation similar to the one for~$u$. Finally,
a small angular perturbation of~$h$ makes the derivatives on the faces
strictly negative, and an additional term controls the states in
which~$|x_S|$ is large.
\end{rem}

\begin{rem}
\label{r_k3}
For $k_0\geq 3$, by the transience criterion~\cite[Theorem~2.5.8]{MPW17}
(applied to the jump chain of~$Q$), $Q$ is transient if and only if there
are a function $f:\Z_+^d\to\R_+$ and a nonempty set $A\subset\Z_+^d$, not
necessarily finite, such that $\LL f\leq 0$ on $\Z_+^d\setminus A$ and
$f(y)<\inf_Af$ for some $y\notin A$. Once transience is known, such a
function always exists, e.g.\ $x\mapsto\IP_x[Q\text{ hits }x_0]$ with
$A=\{x_0\}$; the question is whether an \emph{explicit} one can be found.
With $\bar\Sigma=R+R^\top$ as in Remark~\ref{r_k2} and
$\varrho(y):=(y\bar\Sigma^{-1}y^\top)^{1/2}$, the natural candidate is
$\varrho(Y)^{-\alpha}$, with correctors as in Remark~\ref{r_k2}. In the
interior,
$\sum_{i,j\in B}\bar\Sigma_{ij}\,\partial_i\partial_j\varrho^{-\alpha}
=-\alpha(k_0-2-\alpha)\varrho^{-\alpha-2}$, which is negative for
$0<\alpha<k_0-2$: this is the gain of dimension. On the face
$\{y_i=0\}$, however, the derivative of~$\varrho^{-\alpha}$ along~$r_i$
equals $-\alpha\varrho^{-\alpha-2}\big(R_a\bar\Sigma^{-1}y^\top\big)_i$, where
$R_a:=\frac12(R-R^\top)$ is the antisymmetric part of~$R$, and this has no
sign in general. In some cases the difficulty is easily circumvented,
e.g.\ if $\tilde P=0$ (no customer ever travels from one balanced node to
another, directly or through~$S$), when $R_a=0$ and a small modification
of~$\varrho^{-\alpha}$ works. In general, one may look for a Lyapunov
function of the form $\varrho^{-\alpha}\psi$, with~$\psi$ a function of the
angular variable only; this leads to an oblique-derivative subsolution
problem on a spherical simplex, whose corners are the main difficulty. This
problem can be solved in specific examples, but we do not know how to
solve it in general for all networks with $k_0\geq 3$.
\end{rem}

\section*{Acknowledgements}
This paper was written together with Claude Opus 5.5, an AI model developed by
Anthropic, in the course of many conversations: the arguments were
discussed, developed and checked, and large parts of the text were drafted
jointly with Claude. The author has verified all the proofs and takes full
responsibility for the content of the paper.

The author was partially supported by
CMUP, member of LASI, which is financed by national funds
through FCT --- Funda\c{c}\~ao
para a Ci\^encia e a Tecnologia, I.P., 
under the project with reference UID/00144/2025,
\url{https://doi.org/10.54499/UID/00144/2025}.

\end{document}